\theoremstyle{plain}
\newtheorem{Rem}{Remark}
\newtheorem{Pro}{Proposition}
\theoremstyle{definition}
\newcommand{\LtR}{ L^2 (\mathbb{R})}
\def\RR{\mathbb{R}}
\def\CC{\mathbb{C}}
\def\ZZ{\mathbb{Z}}
\def\NN{\mathbb{N}}
\def\OO{\mathcal{O}}
\newcommand{\bd}{\mathbf}
\begin{document}
%
% paper title
% can use linebreaks \\ within to get better formatting as desired
\title{A framework for invertible, real-time
constant-Q transforms }
%
%
% author names and IEEE memberships
% note positions of commas and nonbreaking spaces ( ~ ) LaTeX will not break
% a structure at a ~ so this keeps an author's name from being broken across
% two lines.
% use \thanks{} to gain access to the first footnote area
% a separate \thanks must be used for each paragraph as LaTeX2e's \thanks
% was not built to handle multiple paragraphs
%

\author[N.~Holighaus]{Nicki Holighaus}
\address{Acoustics Research Institute, Austrian Academy of Sciences, Wohllebengasse 12-14, 1040 Vienna, Austria}
\email[Nicki Holighaus]{nicki.holighaus@univie.ac.at}

\author[M.~D\"orfler]{Monika D\"orfler}
\address{Numerical Harmonic Analysis Group, Faculty of Mathematics, University of Vienna, Alserbachstra\ss{}e 23, 1090 Vienna, Austria}
\email[Monika D\"orfler]{monika.doerfler@univie.ac.at}

\author[G.~Velasco]{Gino Angelo Velasco}
\address{Institute of Mathematics, University of the Philippines-Diliman, 1101 Quezon City, Philippines}
\email[Gino Angelo Velasco]{gamvelasco@math.upd.edu.ph}

\author[T.~Grill]{Thomas Grill}
\address{Austrian Research Institute for Artificial Intelligence, Freyung 6/6, 1010 Vienna, Austria}
\email[Thomas Grill]{gr@grrrr.org}

\date{\today}

% The paper headers
\markboth{sliCQ manuscript}%
{Revision}
% The only time the second header will appear is for the odd numbered pages
% after the title page when using the twoside option.
% 
% *** Note that you probably will NOT want to include the author's ***
% *** name in the headers of peer review papers.                   ***
% You can use \ifCLASSOPTIONpeerreview for conditional compilation here if
% you desire.

% make the title area
\begin{abstract}
%\boldmath
Audio signal processing frequently requires time-frequency representations and in many applications,
a non-linear spacing of frequency-bands is preferable. This paper introduces a framework for efficient
implementation of  invertible signal transforms allowing for non-uniform and
in particular non-linear frequency resolution. Non-uniformity in frequency
is realized by applying  \emph{nonstationary Gabor frames} with adaptivity
in the frequency domain.
The realization of a perfectly invertible  \emph{constant-Q} transform is described
in detail.  To achieve real-time processing, independent of signal length,
slice-wise processing of the full input signal is proposed and referred to
as \emph{sliCQ transform}. 

By applying frame theory and FFT-based processing, the presented approach overcomes computational inefficiency 
and lack of invertibility of classical constant-Q transform implementations. Numerical simulations evaluate the  efficiency of the proposed algorithm and 
the method's applicability is illustrated by experiments on   real-life audio 
signals.
\end{abstract}

\maketitle

% IEEEtran.cls defaults to using nonbold math in the Abstract.
% This preserves the distinction between vectors and scalars. However,
% if the journal you are submitting to favors bold math in the abstract,
% then you can use LaTeX's standard command \boldmath at the very start
% of the abstract to achieve this. Many IEEE journals frown on math
% in the abstract anyway.

% Note that keywords are not normally used for peerreview papers.
%\begin{IEEEkeywords}
%Time-frequency dictionary, constant-Q, Gabor frames, real-time, audio
%signals 
%\end{IEEEkeywords}

% For peer review papers, you can put extra information on the cover
% page as needed:
% \ifCLASSOPTIONpeerreview
% \begin{center} \bfseries EDICS Category: 3-BBND \end{center}
% \fi
%
% For peerreview papers, this IEEEtran command inserts a page break and
% creates the second title. It will be ignored for other modes.
%\IEEEpeerreviewmaketitle

\section{Introduction}
\label{sec:intro}
Analysis, synthesis and processing of sound  is commonly based on the representation  of audio signals by means of time-frequency dictionaries.   
The short-time Fourier transform (STFT), also referred to as \emph{Gabor transform}, is a widely
used tool %for audio signal processing 
due to its straight-forward
interpretation and FFT-based implementation, which ensure efficiency and invertibility \cite{ma09,do86}. 
STFT features a uniform time and frequency resolution and a linear spacing of the time frequency bins.

In contrast, the constant-Q transform (CQT), 
originally introduced in \cite{boyo78} and in music processing by J.~Brown~\cite{BR91}, provides a frequency resolution that depends on geometrically spaced center 
frequencies of the analysis windows. In particular, the Q-factor, i.e.~the ratio of  center frequency to bandwidth of each window, is constant over all frequency  bins; the constant Q-factor leads to a finer frequency resolution 
in  low frequencies  whereas time resolution improves with increasing frequency. This principle makes the constant-Q transform well-suited for audio data, since it better reflects the resolution of the human auditory system than the linear frequency-spacing provided by the FFT, cf.~\cite{SM09} and references therein. Furthermore, musical characteristics such as overtone structures remain invariant under frequency shifts in a constant-Q transform, which is a natural feature from  a perception point of view. 
In speech 
and music processing, perception-based considerations are important, which is one of the reasons why CQTs, due to their previously discussed properties, are often desirable  in these fields.  An example of a CQ-transform, obtained with our algorithm, is shown in Figure~\ref{fig:STFTvsNSGT}.

The principal idea of CQT is  reminiscent of wavelet transforms, compare~\cite{BaSe09}.  As opposed to wavelet transforms, the original CQT is not invertible and does not rely on any concept of (orthonormal) bases. On the other hand, the 
number of bins (frequency channels) per octave is much higher in the CQT than most traditional wavelet techniques would allow for. Partly due to this requirement, the 
computational efficiency of the original transform as well as  its improved versions, cf.~\cite{cqt92}, may often be insufficient.
Moreover,  the lack of invertibility of existing CQTs has become an important issue: for some desired
applications, such as extraction and modification, e.g. transposition,  of distinct parts of the signal, the unbiased reconstruction from analysis coefficients is crucial. Approximate methods for reconstruction from constant-Q coefficients have been proposed before, in particular for signals which are sparse in the frequency domain~\cite{crcyfi06} and by octave-wise processing in~\cite{klsc10}.

In the present contribution,  we are  interested in  inversion in the sense of {\it perfect reconstruction}, i.e.~up to numerical precision; to this end, we  investigate a new approach to constant-Q  signal processing. The presented framework has the following core properties:
\begin{enumerate}
\item Relying on concepts from frame theory,~\cite{ma09}, we suggest  the implementation  of a constant-Q transform using the nonstationary Gabor transform (NSGT), which guarantees perfect invertibility. This perfectly invertible constant-Q transform is subsequently called \emph{constant-Q nonstationary Gabor transform} (CQ-NSGT).
\item We introduce a  preprocessing step by \emph{slicing} the signal to pieces of (usually uniform) finite length. Together with FFT-based methods, this allows for bounded delay and results in linear processing time. Thus, our algorithm   lends itself to real-time processing and the resulting transform is referred to as  \emph{sliced constant-Q transform} (sliCQ).
\end{enumerate}
NSGTs, introduced in \cite{ja05-2,badohojave11}, generalize  the classical sampled short-time Fourier transform
or Gabor transform~\cite{ma09,fest98}. They allow for fast, FFT-based implementation of both analysis and reconstruction under mild conditions on the analysis windows. The CQ-NSGT was first presented  in~\cite{dogrhove11}; the  frequency-resolution of the proposed CQ-NSGT is essentially identical to  that of the CQT, cf.~Figure~\ref{fig:STFTvsNSGT} for an example. 

\begin{figure}[t!]
\begin{center}
\includegraphics[width=8cm]{./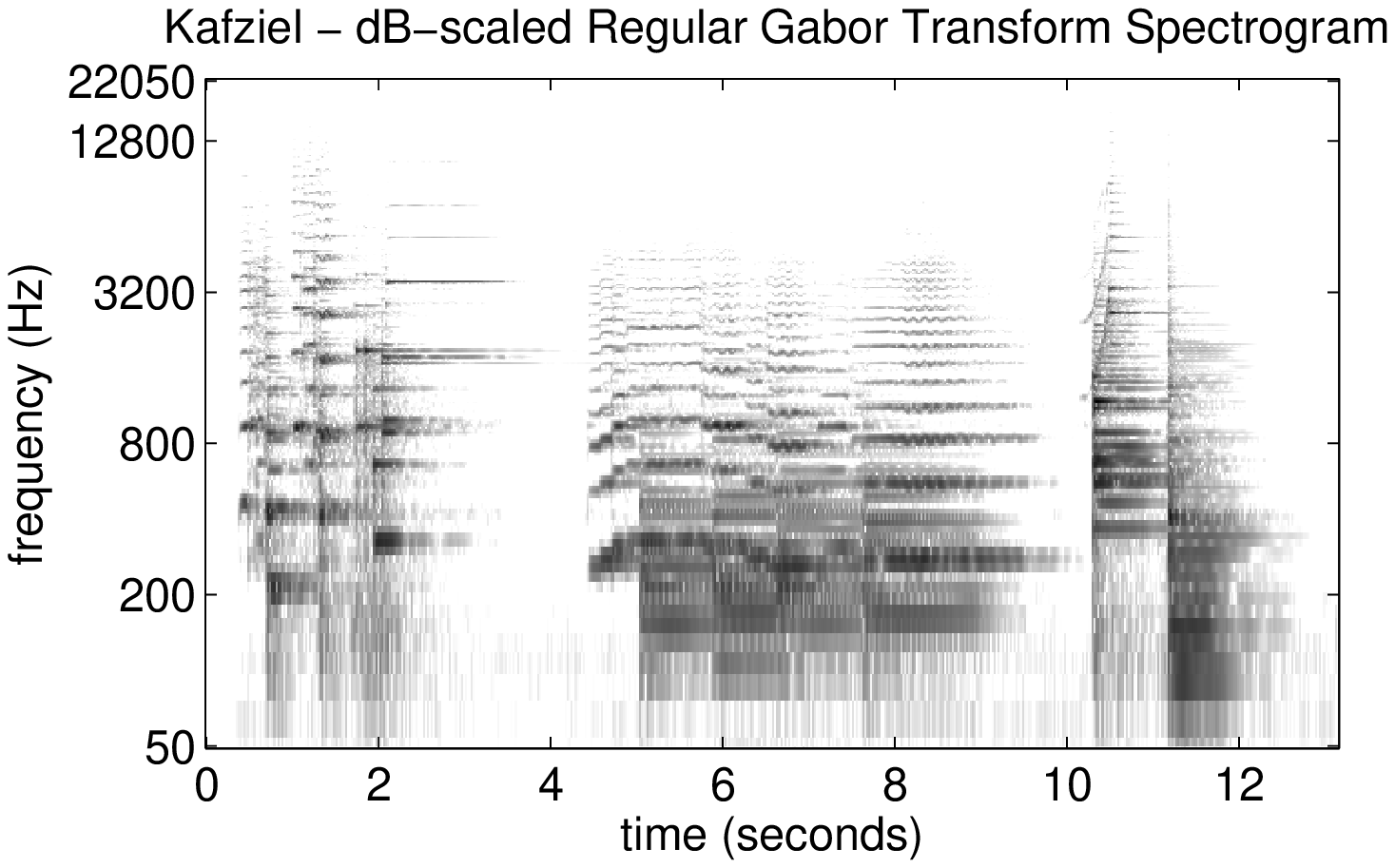}

\vspace{.5cm}
\includegraphics[width=8cm]{./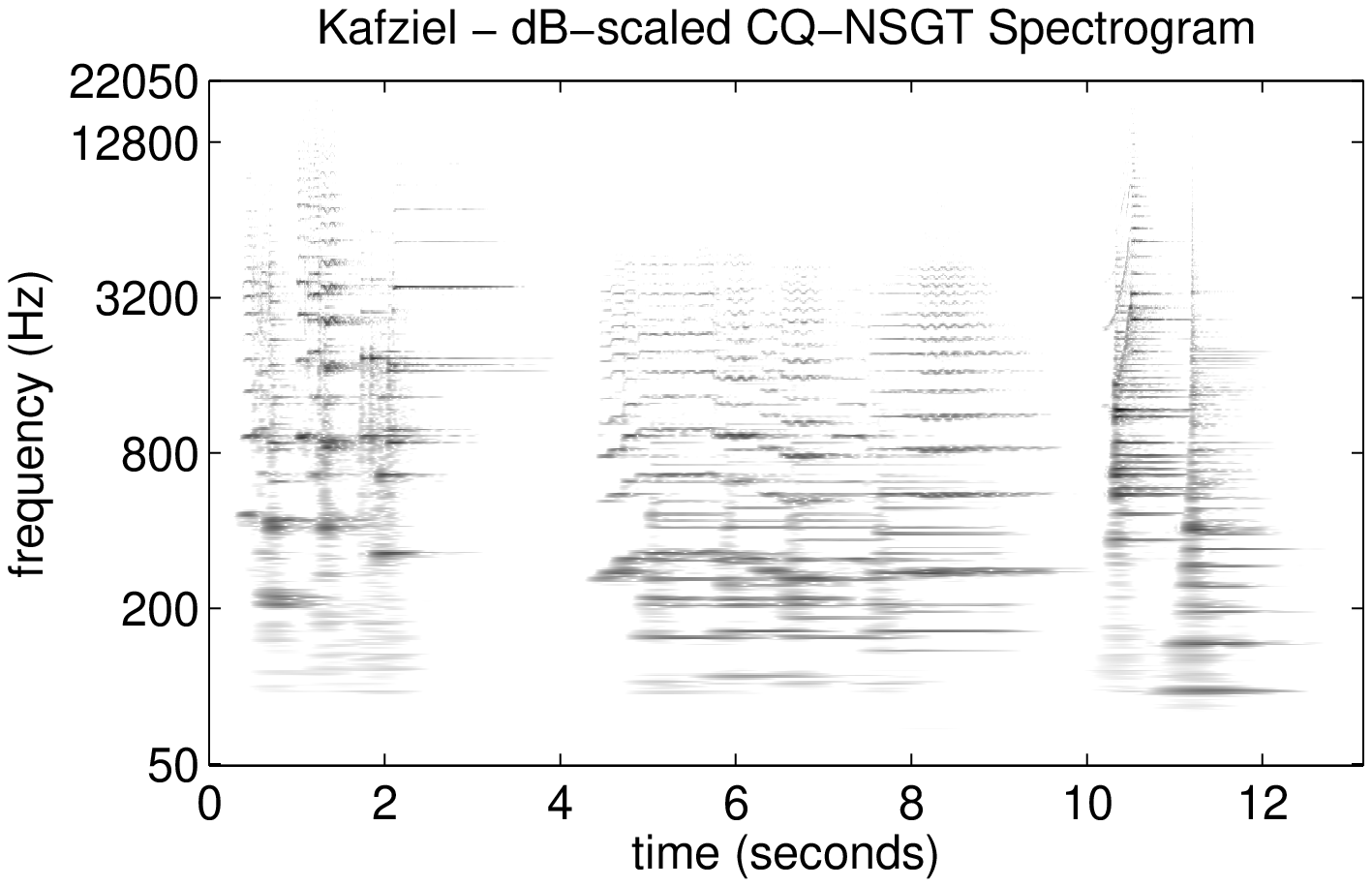}
\end{center}
%  \vspace{2.0cm}
%  \centerline{(a) Result 1}\medskip
\caption{Time-frequency representations on a logarithmically 
scaled frequency axis: STFT spectrogram (top) and constant-Q NSGT spectrogram (bottom).}
\label{fig:STFTvsNSGT}
\end{figure}

The main drawback of the CQ-NSGT is the inherent necessity to obtain a Fourier transform of  the entire signal 
prior to actual processing. This problem prohibits real-time implementation and is overcome by a slicing step, which preserves the perfect reconstruction property.
However, blocking effects and time-aliasing may be observed if the coefficients are modified in applications such as de-noising or transposition and time-shift of certain signal components.  While slicing the signal  naturally introduces a trade-off between delay and finest possible frequency resolution, the parameters can be chosen to suppress blocking artifacts and to leave the constant-Q coefficient structure intact.  

The rest of this paper is organized as follows. In Section~\ref{sec:NSGTfr} we introduce the concepts of frames as 
overcomplete, stable spanning sets, with a focus on nonstationary Gabor (NSG) systems and their properties. We recall the conditions
for these systems to constitute so-called \emph{painless} frames, a special case that allows for straightforward inversion.
Section~\ref{sec:CQNSGT} describes the construction of the CQ-NSGT by NSG frames with adaptivity in the frequency domain. This is the starting 
point for the sliCQ transform, which is explored in  Section~\ref{sec:SLICQ}. After giving the general idea, we describe 
% the influence of the transform parameters on the representation. 
%In particular, we address the problem of time-aliasing in Appendix \ref{sec:Alias} and the 
interpretation of the sliCQ-coefficients in relation 
to the full-length transform in Section~\ref{sec:CQ_SLICQ}. Subsequently, Section~\ref{sec:NUMSIM} is 
concerned with an analysis of the transforms' numerical properties, in particular computation time and complexity, as well as the quality of 
approximation of the CQ-NSGT coefficients by the sliCQ, accompanied by a set of simulations. Finally, in Section~\ref{sec:EXPERI} the CQ-NSGT is applied 
and evaluated in the analysis and processing of real-life signals. The paper is closed by a short summary and conclusion.

% \begin{figure}[t!]
% \begin{minipage}[t]{1.0\linewidth}
%   \centering
%  
% \centerline{\includegraphics[width=8.5cm]{./pics/kafzielGabor1.eps}}\ \\
%   \centerline{\includegraphics[width=8.5cm]{./pics/kafzielngt48.eps}}
% %  \vspace{2.0cm}
% %  \centerline{(a) Result 1}\medskip
% \end{minipage}
% \caption{Representations  of a musical piece for violin and piano using the classical sampled STFT (Gabor transform) and the CQ-NSGT, 
% respectively. A Hann window of length 1024 samples with a hop-size of 512 samples was used for the Gabor transform, while a minimum 
% frequency of $\xi_{\text{min}} = 50$~Hz at 48 bins per octave was used for the CQ-NSGT.}
% \label{fig:1.1}

\section{Nonstationary Gabor Frames}\label{sec:NSGTfr}

Frames,  first mentioned in~\cite{dusc52}, also cf.~\cite{chko07-1,ma09}, generalize (orthonormal) bases and allow for redundancy and thus  design  flexibility in  signal representations. Frames may be tailored to a specific  application or certain requirements such as a constant-Q frequency resolution. Loosely speaking, we wish to  represent a given signal of interest as a sum of the frame members $\varphi_{n,k}$,  weighted by coefficients $c_{n,k}$:
\begin{equation}\label{exp} f  =  \sum_{n,k} c_{n,k} \varphi_{n,k}. \end{equation} 
The double indexes $(n,k)$ allude to the fact that each atom has a certain location and concentration in time  and frequency.
%, compare Figure~\ref{fig:sampgrid}. 
Frame theory establishes  conditions under which an expansion of the form \eqref{exp} can be obtained with coefficients leading to stable, perfect reconstruction. 

For this contribution, we only consider frames for $\mathbb{C}^L$, that is vector spaces of finite, discrete signals, understood as functions $f,g$ on $\mathbb{C}^L$. We denote by $\langle f,g\rangle$ the inner product of $f$ and $g$, i.e.
%\begin{equation*}
 $\langle f,g\rangle = \sum_{l = 0}^{L-1}f[l]\overline{g[l]}$ and $\|f\|_2 = \sqrt{\langle f,f\rangle}.$
%\end{equation*}
The structures introduced here can easily be extended to the Hilbert space of quadratically integrable functions, $\LtR$.

\subsection{Frames}\label{ssec:Frames}

Consider a collection of atoms $\varphi_{n,k}\in\mathbb{C}^L$ with  $(n,k) \in I_N \times I_K$ for finite index sets $I_N,I_K$. %Here, $n$ may be thought of as a time index and $k$ as an index related to frequency. APPEARED BEFORE!
We then define the frame operator $\mathbf{S}$ by 
\begin{equation}\label{eq:frameop} 
\mathbf{S}f = \sum_{n,k} \langle f, \varphi_{n,k} \rangle \varphi_{n,k}, 
\end{equation} 
for all $f\in\mathbb{C}^L$. If the linear operator $\mathbf{S}$ is invertible on $\mathbb{C}^L$, then the set of functions $\{\varphi_{n,k}\}_{(n,k) \in I_N \times I_K}$, is a \emph{frame}\footnote{Note that, if  $\{\varphi_{n,k}, (n,k) \in I_N \times I_K\}$ is an orthonormal basis, then $\mathbf{S}$ is the identity operator.}. In this case, we may define a \emph{dual frame} by 
\begin{equation}\label{eq:candual}
\widetilde{\varphi_{n,k}} = \mathbf{S}^{-1}\varphi_{n,k} 
\end{equation}
and reconstruction from the coefficients $  c_{n,k} = \langle f, \varphi_{n,k} \rangle$ is straight-forward:
\begin{align*}\label{eq:recon}
f & = \mathbf{S}^{-1}  \mathbf{S} f =  \sum_{n,k} \langle f, \varphi_{n,k} \rangle \mathbf{S}^{-1}\varphi_{n,k}= \sum_{n,k} c_{n,k} \widetilde{\varphi_{n,k}}\\
  & = \mathbf{S}  \mathbf{S}^{-1} f =  \sum_{n,k} \langle f, \mathbf{S}^{-1}\varphi_{n,k} \rangle \varphi_{n,k}= \sum_{n,k} \langle f, \widetilde{\varphi_{n,k}} \rangle \varphi_{n,k}.
\end{align*}

We next introduce a  case of particular importance, the so-called {\it Gabor frames}, for which the elements $\varphi_{n,k}$ are obtained from a single window $\varphi$ by time- and frequency-shifts along a lattice. Let $\mathbf{T}_x$ and $\mathbf{M}_{\omega}$ denote a time-shift by $x$ and a frequency shift (or modulation) by $\omega$, i.e.
\begin{equation*}
  \mathbf{T}_x f[l] = f[l-x] \quad \text{and} \quad \mathbf{M}_\omega f[l] = e^{2\pi i l\cdot \omega/L}f[l],
\end{equation*}
where $l-x$ is considered modulo $L$. Furthermore, we use the normalization  
\[\mathcal{F}f[j] =\hat{f}[j] = \frac{1}{\sqrt{L}}\sum_{l=0}^{L-1} f[l] e^{-2\pi i l\cdot j/L}\] for  the discrete Fourier transform of $f$. It follows that $\mathcal{F}(\mathbf{T}_x f) = \mathbf{M}_{-x} \hat{f}$ and $\mathcal{F}(\mathbf{M}_{\omega} f) = \mathbf{T}_{\omega} \hat{f}$.

Fixing a time-shift parameter $a$ and a frequency-shift parameter $b$, with $L/a,L/b\in\mathbb{N}$, we call the collection of atoms $\mathcal{G} = \{\varphi_{n,k} = \mathbf{M}_{kb}\mathbf{T}_{na}\varphi\}_{(n,k)\in I_N \times I_K}$, with $I_N \times I_K = \ZZ_{L/a}\times\ZZ_{L/b}$, a \emph{Gabor system}. If $\mathcal{G}$ is a frame, it is called a Gabor frame. For Gabor frames, the frame coefficients are given by samples of the short-time Fourier transform (STFT) of $f$ with respect to the window $\varphi$:
\begin{align}\label{eq:STFT}
c_{n,k} = \langle f, \varphi_{n,k} \rangle &= \langle f, \mathbf{M}_{kb}\mathbf{T}_{na}\varphi \rangle\notag\\
&= \sum_{l=0}^{L-1}  f[l] \overline{\varphi [l-na]}e^{-2\pi i l\cdot kb/L}.
\end{align}
In a general setting, the inversion of the operator $\mathbf{S}$  poses a problem in numerical realization of frame analysis. However, for Gabor frames,  it was shown in~\cite{dagrme86}, that under certain conditions, usually fulfilled in practical applications,  $\mathbf{S}$ is diagonal, and a dual frame can be calculated easily. This situation of \emph{painless non-orthogonal expansions} can now be generalized to allow for adaptive resolution.

\subsection{Frequency-Adaptive Painless Nonstationary Gabor Frames}\label{ssec:Painless}
In classical Gabor frames, we  obtain all  samples of the STFT in \eqref{eq:STFT} by applying the same  window $\varphi$, shifted along a regular set of sampling points  and taking an FFT of the same length. In order to achieve adaptivity of the resolution in either time or frequency, we relax the regularity of classical Gabor frames to derive \emph{nonstationary Gabor frames}.

The original motivation for the introduction of NSGT was the desire to adapt both window size and sampling density in time, cf.~\cite{ja05-2,badohojave11}, in order to accurately resolve transient signal components. Here, we apply the same idea in frequency, i.e. adapt both the bandwidth and sampling density in frequency. From an algorithmic point of view, we apply a nonstationary Gabor system to the Fourier transform of the input signal. 

The windows are constructed directly in the frequency domain by taking real-valued filters $g_k$ centered at $\omega_k$.
%We construct the windows directly in the frequency domain by taking a sequence of windows $g_k$ with \emph{support} (the interval such that the window is nonzero) of length $L_k\leq L$. 
The inverse Fourier transforms $\widecheck{g_k} := \mathcal{F}^{-1}g_k$ are the time-reverse impulse responses of the corresponding (frequency-adaptive) filters. 
Therefore, we let $\widecheck{g_k}$, $k\in I_K$, denote the members of  a finite collection of band-limited windows, well-localized in time, whose Fourier transforms $g_k = \mathcal{F}\widecheck{g_k}$ are centered around possibly irregularly (or, e.g. geometrically) spaced frequency points $\omega_k$.

Then, we select frequency dependent time-shift parameters (hop-sizes) $a_k$  as follows: if the \emph{support} (the interval where the vector is nonzero) of  $g_k$ is contained in an interval of length $L_k$, then  $a_k$ is chosen such that
\begin{equation}\label{eq:framecond1}
a_k\leq \frac{L}{L_k}\,\,\mbox{ for all } k. 
\end{equation}
In other words, the time-sampling points have to be chosen dense enough to guarantee \eqref{eq:framecond1}. If we denote by $g_{n,k}$ the modulation of $g_k$ by $-na_k$, i.e. $g_{n,k} = \mathbf{M}_{-n a_k}g_k$, then we obtain the  frame members $\varphi_{n,k}$ by setting
\begin{equation*}
    \varphi_{n,k} = \widecheck{g_{n,k}} = \mathcal{F}^{-1}(\mathbf{M}_{-n a_k}g_k) = \mathbf{T}_{n a_k}\widecheck{g_k}, 
\end{equation*}
where $k\in I_K$ and $n=0,\ldots,L/a_k-1$. The system $\mathcal{G}(\bd{g},\bd{a}):=\{g_{n,k} = T_{na_k}g_k\}_{n,k}$ is a \emph{painless nonstationary Gabor system}, as described in \cite{badohojave11}, for $\CC^L$. We also define $\bd{g}:= \{g_k\in\CC^L\}_{k\in I_K}$ and $\bd{a}:= \{a_k\}_{k\in I_K}$. By Parseval's formula, we see that the frame coefficients can be written as 
\begin{equation}\label{eq:CQ_coeff}
  c_{n,k} = \langle f,\widecheck{g_{n,k}} \rangle = \langle \hat{f},\mathbf{M}_{-n a_k}g_k \rangle.
\end{equation}
For convenience, we use the notation $c := \{c_k\}_{k\in I_K}:=\{\{c_{n,k}\}_{n=0}^{L/a_k-1}\}_{k\in I_K}$ to refer to the full set of coefficients and channel coefficients, respectively. By abuse of notation, we  indicate by $c\in \CC^{L/a_k \times |I_K|}$  that $c$ is an irregular array with $|I_K|$ columns, the $k$-th column possessing $L/a_k$ entries. The NSG coefficients can be computed using the following algorithm.

\begin{algorithm}
\caption{NSG analysis: $c = \textbf{CQ-NSGT}_L(f,\bd{g},\bd{a})$}\label{alg:nsganalysis}
\begin{algorithmic}[1]
\State \textbf{Initialize} $f,g_k$ for all $k\in I_K$
\State $f\gets \textbf{FFT}_L(f)$
\For{$k\in I_K,~n=0,\ldots,L/a_k-1$}
\State $c_k \gets \sqrt{L/a_k}\cdot\textbf{IFFT}_{L/a_k}(f\overline{g_k})$
\EndFor
\end{algorithmic}
\end{algorithm}

Here $\textbf{(I)FFT}_N$ denotes a (inverse) Fast Fourier transform of length $N$, including the necessary periodization or zero-padding preprocessing to convert the input vector to the correct length $N$. The analysis algorithm above is complemented by Algorithm \ref{alg:nsgsynthesis}, an equally simple synthesis algorithm that synthesizes a signal $\tilde{f}$ from a set of coefficients $c$.

\begin{algorithm}
\caption{NSG synthesis: $\tilde{f} = \textbf{iCQ-NSGT}_L(c,\tilde{\bd{g}},\bd{a})$}\label{alg:nsgsynthesis}
\begin{algorithmic}[1]
\State \textbf{Initialize} $c_{n,k},\widetilde{g_k}$ for all $n=0,\ldots,L/a_k-1$, $k\in I_K$
\For{$k\in I_k$}
\State $f_k \gets \sqrt{a_k/L} \cdot \textbf{FFT}_{L/a_k}(c_k)$
\EndFor
\State $\tilde{f}\gets \sum_{k\in I_K} f_k\widetilde{g_k}$
\State $\tilde{f}\gets \textbf{IFFT}_L(\tilde{f})$
\end{algorithmic}
\end{algorithm}

If $\mathcal{G}(\bd{g},\bd{a})$ and $\mathcal{G}(\tilde{\bd{g}},\bd{a})$ are a pair of dual frames, then we can reconstruct a function perfectly from its NSG analysis coefficients. For more details and a proof of the following propositions, see Appendix \ref{sec:CQprop}.

\begin{Pro}\label{pro:reconst}
  Let $\mathcal{G}(\bd{g},\bd{a})=\{g_{n,k} = T_{na_k}g_k\}_{n,k}$ and $\mathcal{G}(\tilde{\bd{g}},\bd{a})=\{\widetilde{g_{n,k}} = T_{na_k}\widetilde{g_k}\}_{n,k}$ be a pair of dual frames. If $c$ is the output of 
  $\textbf{\emph{CQ-NSGT}}_L(f,\bd{g},\bd{a})$ (Algorithm \ref{alg:nsganalysis}), then the output $\tilde{f}$ of $\textbf{\emph{iCQ-NSGT}}_L(c,\tilde{\bd{g}},\bd{a})$ (Algorithm \ref{alg:nsgsynthesis}) equals $f$, i.e.
  \begin{equation}
    \tilde{f} = f, \quad \text{for all } f\in\CC^L.
  \end{equation}
\end{Pro}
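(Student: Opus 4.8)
The plan is to recognise Algorithm~\ref{alg:nsganalysis} and Algorithm~\ref{alg:nsgsynthesis} as FFT-accelerated realisations of, respectively, the analysis map $f\mapsto(\langle f,\widecheck{g_{n,k}}\rangle)_{n,k}$ and the synthesis map $c\mapsto\sum_{n,k}c_{n,k}\,\widecheck{\widetilde{g_{n,k}}}$ attached to the painless NSG systems $\mathcal G(\bd{g},\bd{a})$ and $\mathcal G(\tilde{\bd{g}},\bd{a})$. Once that is in place, $\tilde f=f$ is immediate: feeding $c_{n,k}=\langle f,\widecheck{g_{n,k}}\rangle$ into the synthesis map yields $\sum_{n,k}\langle f,\widecheck{g_{n,k}}\rangle\,\widecheck{\widetilde{g_{n,k}}}$, which equals $f$ for every $f\in\CC^L$ exactly because $\mathcal G(\bd{g},\bd{a})$ and $\mathcal G(\tilde{\bd{g}},\bd{a})$ are assumed to be a pair of dual frames (see the reconstruction identity in \S\ref{ssec:Frames}).

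First I would unwind the analysis algorithm. Write $M_k:=L/a_k\in\NN$. After line~2 has replaced $f$ by $\hat f=\mathcal F f$, line~4 forms the pointwise product $\hat f\,\overline{g_k}\in\CC^L$, periodises it to length $M_k$, applies $\textbf{IFFT}_{M_k}$, and rescales by $\sqrt{L/a_k}$; using $\mathcal F(\mathbf M_{\omega}f)=\mathbf T_{\omega}\hat f$ and propagating the normalisation constants, one checks that the $n$-th entry of the resulting vector $c_k$ is exactly $c_{n,k}=\langle\hat f,\mathbf M_{-na_k}g_k\rangle=\langle f,\widecheck{g_{n,k}}\rangle$, i.e.\ \eqref{eq:CQ_coeff}. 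The one substantive point is that periodising to length $M_k$ loses no information: $\supp g_k$ lies inside an arc of length $L_k$, and the painless condition \eqref{eq:framecond1} forces $L_k\le M_k$, so any two distinct indices that are congruent modulo $M_k$ differ by at least $M_k\ge L_k$ and hence cannot both lie in $\supp g_k$; consequently, on $\supp g_k$ the periodised vector still coincides with $\hat f\,\overline{g_k}$.

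Next I would push the same bookkeeping through the synthesis algorithm. In line~3 the map $\textbf{FFT}_{M_k}$ inverts the length-$M_k$ transform of the analysis step, so that (up to normalisation) $f_k$, read periodically in the frequency index, is the length-$M_k$ folding of $\hat f\,\overline{g_k}$; line~4 multiplies by $\widetilde{g_k}$ and sums over $k$, and line~5 applies $\textbf{IFFT}_L$. Expanding everything and using the root-of-unity identity $\sum_{n=0}^{M_k-1}e^{2\pi i(j'-j)n/M_k}=M_k$ when $M_k\mid(j'-j)$ and $0$ otherwise, together with the \emph{same} support argument as above (now against $\supp\widetilde{g_k}$) to discard the wrap-around terms, one finds that the output equals precisely $\sum_{n,k}c_{n,k}\,\widecheck{\widetilde{g_{n,k}}}$; equivalently, $\mathcal F\tilde f[j]=\big(\sum_{k}w_k\,\overline{g_k[j]}\,\widetilde{g_k}[j]\big)\hat f[j]$ with $w_k$ the channel weight appearing in the diagonal (Walnut-type) representation of the painless frame operator of $\mathcal G(\bd{g},\bd{a})$. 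Since $\mathcal G(\bd{g},\bd{a})$ and $\mathcal G(\tilde{\bd{g}},\bd{a})$ being dual is the same as $\sum_{k}w_k\,\overline{g_k}\,\widetilde{g_k}\equiv 1$ in the painless setting, this gives $\mathcal F\tilde f=\hat f$, hence $\tilde f=f$.

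I expect the genuinely delicate step to be this aliasing-free reduction, which is where \eqref{eq:framecond1} really pulls its weight: the entire argument rests on each length-$M_k$ FFT capturing the full information carried by the band $\supp g_k$, without two indices of a residue class colliding inside that band. Everything else — keeping track of the $\sqrt{L/a_k}$-type normalisation factors (including those implicit in the periodisations/zero-paddings hidden inside $\textbf{(I)FFT}_N$), and using that $\mathcal F$ intertwines time shifts with modulations — is routine, and the characterisation of dual painless NSG frames by the pointwise condition $\sum_{k}w_k\,\overline{g_k}\,\widetilde{g_k}\equiv 1$ may be quoted from \cite{badohojave11}.
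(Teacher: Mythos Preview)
Your plan is correct and mirrors the paper's own proof: identify Algorithm~\ref{alg:nsganalysis} with the frame-analysis map $f\mapsto\langle\hat f,\bd{M}_{-na_k}g_k\rangle$, identify Algorithm~\ref{alg:nsgsynthesis} with the frame-synthesis sum $\sum_{n,k}c_{n,k}\,\bd{M}_{-na_k}\widetilde{g_k}$ on the Fourier side, and conclude from the dual-frame hypothesis.

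One point of emphasis is off, though. You flag the aliasing-free reduction via \eqref{eq:framecond1} as ``the genuinely delicate step'', and in the synthesis step you invoke a support hypothesis on $\widetilde{g_k}$ to discard ``wrap-around terms''. In fact neither verification needs any support condition. In the analysis step, once you write $c_k[n]$ as the double sum $\sum_{m}\sum_{l}(\hat f\,\overline{g_k})[m+lL/a_k]\,e^{2\pi i nma_k/L}$, the exponential is $(L/a_k)$-periodic in the argument, so the double sum collapses to $\sum_{j=0}^{L-1}(\hat f\,\overline{g_k})[j]\,e^{2\pi i nja_k/L}=\langle\hat f,\bd{M}_{-na_k}g_k\rangle$ regardless of $\supp g_k$. (The paper records the same ``only one term is nonzero'' observation---with what appears to be a typo---but the computation does not actually use it.) Likewise, periodically extending $f_k$ to length $L$ in the synthesis step already produces $\sum_n c_{n,k}\,\bd{M}_{-na_k}\widetilde{g_k}[j]$ exactly, with nothing to discard; this matters, because Proposition~\ref{pro:reconst} assumes nothing about $\supp\widetilde{g_k}$. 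Your alternative conclusion via the pointwise identity $\sum_k w_k\,\overline{g_k}\,\widetilde{g_k}\equiv 1$ does require painless structure on both systems and is therefore stronger than the stated hypothesis; the paper instead invokes the abstract dual-frame reconstruction identity directly, which is both shorter and sufficient.
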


The remaining problem is to ascertain that $\mathcal{G}(\bd{g},\bd{a})$ is a frame and to compute the dual frame. The following proposition is a discrete version of an equivalent result for NSG systems in $\LtR$ and achieves both, using the painless case condition \eqref{eq:framecond1}.

\begin{Pro}\label{pro:NSGpainless}
  Let $\mathcal{G}(\bd{g},\bd{a})$ an NSG system satisfying \eqref{eq:framecond1}. This system is a frame if and only if 
  \begin{equation}\label{eq:painlessframe1}
    0 < \sum_{k\in I_K} \frac{L}{a_k} |g_k[j]|^2 < \infty,\quad \text{ for all } j = 0,\ldots,L-1
  \end{equation}
  and the generators of the canonical dual frame $\mathcal{G}(\tilde{\bd{g}},\bd{a})$ are given by
  \begin{equation}\label{eq:painlessframe2}
    \widetilde{g_k}[j] = \frac{g_k[j]}{\sum_{l\in I_K} \frac{L}{a_l} |g_l[j]|^2}.
  \end{equation}
\end{Pro}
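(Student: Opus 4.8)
The plan is to diagonalize the frame operator $\mathbf{S}$ by passing to the Fourier side. Since $\mathcal{F}$ is unitary on $\CC^L$ and $\widehat{\varphi_{n,k}} = \mathbf{M}_{-na_k}g_k$, the system $\mathcal{G}(\bd{g},\bd{a})$ is a frame if and only if the modulated system $\{\mathbf{M}_{-na_k}g_k\}_{n,k}$ is, and the two frame operators are unitarily conjugate: $\mathcal{F}\mathbf{S}\mathcal{F}^{-1} = \mathbf{S}_{\cF}$, where $\mathbf{S}_{\cF}h = \sum_{k\in I_K}\sum_{n=0}^{L/a_k-1}\langle h,\mathbf{M}_{-na_k}g_k\rangle\,\mathbf{M}_{-na_k}g_k$. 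It therefore suffices to prove that $\mathbf{S}_{\cF}$ is the pointwise multiplication operator with symbol $m[j] := \sum_{k\in I_K}\frac{L}{a_k}|g_k[j]|^2$; the characterization of the frame property and the dual-window formula both fall out of this.

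First I would fix $k\in I_K$ and compute the partial operator $\mathbf{S}_{\cF,k}h := \sum_{n=0}^{L/a_k-1}\langle h,\mathbf{M}_{-na_k}g_k\rangle\,\mathbf{M}_{-na_k}g_k$. Expanding $\langle h,\mathbf{M}_{-na_k}g_k\rangle = \sum_{l=0}^{L-1} h[l]\overline{g_k[l]}\,e^{2\pi i l n a_k/L}$ and interchanging the finite sums gives, at each index $j$,
\[
  \mathbf{S}_{\cF,k}h[j] = g_k[j]\sum_{l=0}^{L-1} h[l]\,\overline{g_k[l]}\sum_{n=0}^{L/a_k-1} e^{2\pi i (l-j) n a_k/L}.
\]
The inner geometric sum equals $L/a_k$ if $l\equiv j \pmod{L/a_k}$ and vanishes otherwise, so only the residue class of $j$ modulo $L/a_k$ contributes:
\[
  \mathbf{S}_{\cF,k}h[j] = \frac{L}{a_k}\,g_k[j]\sum_{l\equiv j\,(\mathrm{mod}\,L/a_k)} h[l]\,\overline{g_k[l]}.
\]

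The key step, which I expect to be the only genuine obstacle, is to collapse this aliased sum using the painless condition \eqref{eq:framecond1}. Since $\supp g_k$ is contained in an interval of length $L_k \le L/a_k$, it contains at most one representative of each residue class modulo $L/a_k$; hence when $g_k[j]\neq 0$ the only surviving term is $l=j$, while when $g_k[j]=0$ the whole expression is zero. In either case $\mathbf{S}_{\cF,k}h[j] = \frac{L}{a_k}|g_k[j]|^2\,h[j]$. Summing over $k\in I_K$ yields $\mathbf{S}_{\cF}h[j] = m[j]\,h[j]$, so $\mathbf{S}$ is unitarily equivalent to multiplication by the nonnegative, finite-valued function $m$.

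The conclusions then follow at once. A multiplication operator by such an $m$ is invertible on $\CC^L$ exactly when $m[j] > 0$ for every $j$, which is precisely \eqref{eq:painlessframe1}; this settles both implications of the frame characterization. For the dual, $\mathbf{S}^{-1}$ corresponds to multiplication by $1/m$, so $\widehat{\widetilde{\varphi_{n,k}}} = \mathbf{S}_{\cF}^{-1}(\mathbf{M}_{-na_k}g_k) = \mathbf{M}_{-na_k}(g_k/m)$; transforming back, the canonical dual is again a painless NSG system, with generators $\widetilde{g_k}[j] = g_k[j]/m[j]$, i.e. \eqref{eq:painlessframe2}. Beyond the support/aliasing argument, what remains is routine bookkeeping with the discrete Fourier normalization and with arithmetic modulo $L$ and $L/a_k$.
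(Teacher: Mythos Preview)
Your proof is correct and is essentially the same argument as the paper's: both show that under the support condition \eqref{eq:framecond1} the frame operator acts as pointwise multiplication by $m[j]=\sum_k \tfrac{L}{a_k}|g_k[j]|^2$, then read off invertibility and the canonical dual. The only difference is presentational---the paper packages the aliasing step as $\textbf{FFT}_{L/a_k}\circ\textbf{IFFT}_{L/a_k}$ acting as $L/a_k$-periodization and then invokes $\chi_{J_k}$, whereas you write out the geometric sum $\sum_n e^{2\pi i(l-j)na_k/L}$ explicitly; these are the same computation.
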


In the next section, we construct a constant-Q NSG system satisfying \eqref{eq:framecond1} and \eqref{eq:painlessframe1}.

\begin{Rem}
Note that 
NSG frames can be equivalently used to design general nonuniform filter banks~\cite{LiNg97,NaBaSm93} in a similar manner. 
\end{Rem}

\section{The CQ-NSGT Parameters: Windows and Lattices}\label{sec:CQNSGT}

The parameters of the NSGT can be designed as to implement various frequency-adaptive transforms. Here, we focus on the parameters leading to an NSGT with constant-Q frequency resolution, suitable for the analysis and processing of music signals, as discussed in the introduction.
%9
%Given a set of functions $\{g_k\in :\ \mathbb{C}^L\}_{k\in I_K}$ and positive integers $\{L_k\}_{k\in I_K}$ associated with the set of real numbers $\{a_k = \frac{L}{L_k}\}_{k\in I_K}$, the discrete \emph{nonstationary Gabor (NSG) system} $\mathcal{G}(g_k,a_k)$ is given by
%\begin{equation*}\end{equation*}
In constant-Q analysis, the functions $g_k$ are considered to be filters with support of length $L_k \leq L$ centered at frequency $\omega_k$ (in samples), such that for the bins corresponding to a certain frequency range, the respective center frequencies and lengths have (approximately) the same ratio. Using these filters, the CQ-NSGT coefficients $c_{n,k}$ are obtained via Algorithm \ref{alg:nsganalysis}, where $k$ indexes the frequency bins, and $n = 0,\ldots,L/a_k-1$.

\begin{table}[t!]
\caption{Center frequency and bandwidth values}\label{tab:freq_band}\vspace{-10pt}
 \renewcommand*{\arraystretch}{1.3}
 \begin{normalsize}
\begin{center}
  \begin{tabular}{|c|c|c|}\hline         
    $k$ & $\xi_k$ & $\Omega_k$ \\\hline\hline
    $0$ & $0$ & $2\xi_{\text{min}}$ \\\hline       
    $1,\ldots,K$ & $\xi_{\text{min}}2^{\frac{k-1}{B}}$ & $\xi_k/Q$\\\hline       
    $K+1$ & $\xi_s/2$ & $\xi_s-2\xi_K$\\\hline       
    $K+2,\ldots,2K+1$ & $\xi_s-\xi_{2K+2-k}$ & $\xi_{2K+2-k}/Q$\\\hline\end{tabular}
  \end{center}
  \end{normalsize}
  \vspace{-15pt}
\end{table}

As detailed in \cite{dogrhove11}, the construction of the filters for the CQ-NSGT depends on the following parameters: minimum and maximum frequencies $\xi_{\text{min}}$ and $\xi_{\text{max}}$ (in Hz), respectively, the sampling rate $\xi_s$, and the number of bins per octave $B$. The center frequencies $\xi_k$ satisfy $\xi_k = \xi_{\text{min}}2^{\frac{k-1}{B}}$, similar to the classical CQT in \cite{BR91}, for $k = 1,\ldots,K$, where $K$ is an integer such that $\xi_{\text{max}}\leq \xi_K < \xi_s/2$, the Nyquist frequency. Note that the correspondence between $\xi_k$ and $\omega_k$ is the conversion ratio from Hz to samples, as detailed in the next paragraphs.

The bandwidths are set to be $\Omega_k = \xi_{k+1}-\xi_{k-1}$, for $k = 2,\ldots,K-1$, which lead to a constant Q-factor $Q=\xi_k/\Omega_k = (2^{\frac{1}{B}}-2^{-\frac{1}{B}})^{-1}$, while $\Omega_1$ and $\Omega_K$ are taken to be $\xi_1/Q$ and $\xi_K/Q$, respectively. Since the signals are real-valued, additional filters are considered which are positioned in a symmetric manner with respect to the Nyquist frequency. Moreover, to ensure that the union of filter supports cover the entire frequency axis, filters with center frequencies corresponding to the zero frequency and the Nyquist frequency are included. The values for $\xi_k$ and $\Omega_k$ over all frequency bins are summarized in Table \ref{tab:freq_band}.

% 
% \begin{equation*}
%     \xi_k=\begin{cases}
%                 0,&k=0\\
%           \xi_{\text{min}}2^{\frac{k-1}{B}}, &k=1,\ldots,K\\
%           \xi_s/2 ,&k=K+1\\
%           \xi_s-\xi_{2K+2-k} ,&k=K+2,\ldots,2K+1,
%        \end{cases}
% \end{equation*}
% \begin{equation*}
%     \Omega_k=\begin{cases}
%     2\xi_{\text{min}},&k=0\\
%   \xi_k/Q, &k=1,\ldots,K\\
%   \xi_s-2\xi_K, &k=K+1\\
%   \xi_{2K+2-k}/Q,&k=K+2,\ldots,2K+1.
%   \end{cases}
% \end{equation*}
With these center frequencies and bandwidths, the filters $g_k$ are set to be $g_k[j] = H((j\xi_s/L-\xi_k)/\Omega_k)$, for $k = 1,\ldots,K,K+2,\ldots,2K+1$, where $H$ is some continuous function centered at $0$, positive inside and zero outside of $]-1/2,1/2[$, i.e. each $g_k$ is a sampled version of a translated and dilated $H$. Meanwhile, $g_0$ and $g_{K+1}$ are taken to be plateau functions centered at the zero and the Nyquist frequencies respectively. Thus, each filter $g_k$ is centered at $\omega_k = \xi_kL/\xi_s$ and has support $L_k = \Omega_k L/\xi_s$. 

It is easy to see that this choice of $\mathcal{G}(\bd{g},\bd{a})$ satisfies the conditions of Proposition~\ref{pro:NSGpainless} %and \ref{Se:ReF}
 for any sequence $\bd{a}$ with $L/a_k \geq L_k$ for all $k\in I_K = \{0,\ldots,2K+1\}$. Note that while $a_k$ might be rational, $L/a_k$ must be integer-valued. Consequently, perfect reconstruction of the signal is obtained from the coefficients $c_{n,k}$ by applying Algorithm \ref{alg:nsgsynthesis} with a dual frame, e.g. the canonical dual given by \eqref{eq:painlessframe2}. 
% 
% Explicitly, the dual filters are given by
% \begin{equation}\label{CQdual} 
%  \widetilde{g_k}[j] = \frac{g_k[j]}{\sum_{k = 0}^{2K+1} L_k |g_k[j]|^2},
% \end{equation}
% $j=0,\ldots,L-1$, and the reconstruction amounts to computing the following sum
% \begin{equation}\label{CQ_rec}
%  \hat{f}[j] = \sum_{k=0}^{2K+1}\sum_{n = 0}^{L_k-1}c_{n,k}\,\widetilde{g_k}[j]\,e^{-2\pi i n(j-\omega_k)/L_k},
% \end{equation}
% which is equivalent to computing FFTs and doing overlap-add. Finally, $f$ is obtained by applying an IFFT to $\hat{f}$.
% 

\section{Real-time processing and the sliCQ}\label{sec:SLICQ}

The CQ-NSGT implementation introduced in the previous sections a priori relies  on a Fourier  transform  of the entire signal. This contradicts the idea of real-time applications, which require bounded delay in  processing  incoming samples 
and linear over-all complexity.  These requirements can  be satisfied by applying the CQ-NSGT in a blockwise manner, i.e. to (fixed length) slices of the input signal. However, the slicing process involves two important challenges: 
First, the windows $h_m$ used for cutting the signal must be smooth and zero-padding has to be applied to suppress time-aliasing and blocking artifacts when coefficient-modification occurs. Second, the coefficients issued from the block-wise transform should be equivalent to the CQ-coefficients obtained from a full-length CQ-NSGT. This can be achieved to high precision by careful choice of both the slicing windows $h_m$ and the analysis windows $g_k$ used in the CQ-NSGT.
\subsection{Structure of the sliCQ transform}\label{Se:Struc}

 \begin{figure}[t!]
     \centering
       \includegraphics[width=8.5cm]{./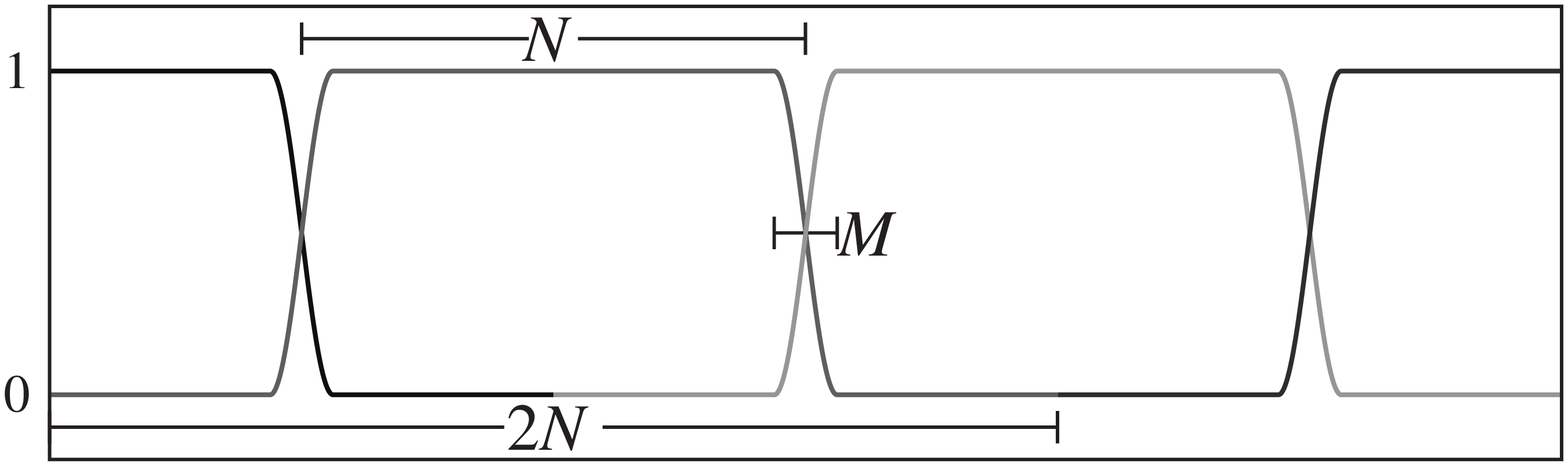}
     \caption{Tukey windows used in the slicing process. Note that the chosen amount of zero-padding leads to a half-overlap situation.}\label{fig:tukeywins}
  \end{figure}
  
We now summarize the individual steps of the sliCQ algorithm and introduce the involved parameters.
\begin{itemize}
  \item[I)] Sliced constant-Q NSGT analysis:
    \begin{enumerate}
      \item Cut the signal $f\in\CC^L$ into overlapping slices $f_m$ of length $2N$ by multiplication with uniform translates of a slicing window $h_0$, centered at $0$.
      \item For each $f^m$, obtain coefficients  $c^m \in\CC^{2N/a_k \times |I_K|}$, by applying  $\textbf{CQ-NSGT}_{2N}(f,\bd{g},\bd{a})$ (Algorithm~\ref{alg:nsganalysis}).%, as described in Section~\ref{sec:CQNSGT}, 
      \item  Due to the overlap of the slicing windows, cf.~Figure~\ref{fig:tukeywins}, each time index %$n = 0,\ldots L/a_k,~ k\in I_K$ 
      is related to two consecutive slices. 
      %$f\cdot h_m$ and $f\cdot h_{m-1}$. 
      For visualization and processing, the slice coefficients $c^m$ are re-arranged into a $2$-layer array $s$, with $s := \{s^l\}_{l\in\{0,1\}} \in \mathbb{C}^{2\times L/a_k\times |I_K|}$, cf. Figure~\ref{fig:slicqarray}.
      %\item Couple the coefficients for odd and even $m$ to obtain an accurate approximation of the full-length CQ-NSGT coefficients:  $c_{n,k} \approx s^0_{n,k} + s^1_{n,k}$. 
%The corresponding sliCQ spectrogram used in graphical visualization is given as 
%  \begin{equation}\label{SLICQspec}
%    S(n^\mathcal{L},k) = |s_{0,n^\mathcal{L},k}+s_{1,n^\mathcal{L},k}|^2. 
%  \end{equation}
    \end{enumerate}
  \item[II)] Sliced constant-Q NSGT synthesis:
    \begin{enumerate}
      \item Retrieve $c^{m}$ by partitioning $s$.
      \item Compute the dual frame $\mathcal{G}(\tilde{\bd{g}},\bd{a})$ for $\mathcal{G}(\bd{g},\bd{a})$ and, for all $m$, $\tilde{f}^m = \textbf{iCQ-NSGT}_{2N}(c^m,\tilde{\bd{g}},\bd{a})$ (Algorithm~\ref{alg:nsgsynthesis}).
      \item Recover $f$ by (windowed) overlap-add.
    \end{enumerate}
\end{itemize}
Note that $L$ must be a multiple of $2N$; this is achieved by zero-padding, if necessary. 
By construction, the positions $(n,k)$ of the coefficients in $s^l$ reflect their time-frequency position with respect to the full-length signal, for $l = 0,1$. 

\begin{figure}[t!]
    \centering
      \includegraphics[width=8.5cm]{./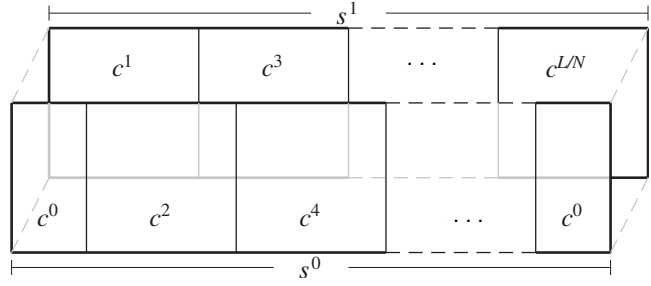}%\vspace{-15pt}
    \caption{Structure of the sliCQ coefficients - schematic illustration}\label{fig:slicqarray}
  \end{figure}
\subsection{Computation of a sliced constant-Q NSGT}

The \emph{sliced constant-Q NSGT} (sliCQ) coefficients of $f$  with respect to $h_0$ and $\mathcal{G}(\bd{g},\bd{a})$ and slice length $2N$ are obtained according to the following algorithm. 
\begin{algorithm}
\caption{sliCQ analysis: $s = \textbf{sliCQ}_{L,N} (f,h_0,\bd{g},\bd{a})$}\label{alg:slicqana}
\begin{algorithmic}[1]
\State \textbf{Initialize} $f,h_0,g_k$ for all $k\in I_K$
\State $m \gets 0$
\For{$m=0,\ldots L/N-1$}
  \For{$j=0,\ldots 2N-1$}
    \State $f^m[j] \gets f\bd{T}_{mN}h_0[j+(m-1)N]$
  \EndFor
  \State $c^m \gets \textbf{CQ-NSGT}_{2N}(f,\bd{g},\bd{a})$
  \State $l \gets (m\mod 2)$
  \For{$k\in I_K,~n^s = 0,\ldots,2N/a_k-1$}
    \State $s^l_{n^s+(m-1)N/a_k,k} \gets c^m_{n^s,k}$
  \EndFor
\EndFor
\end{algorithmic}
\end{algorithm}

Note that in this and the following algorithm, negative indices are used in a circular sense, with respect to the maximum admissible index, e.g. $f[-j]:= f[L-j]$ or $s^l_{-n,k}:= s^l_{L/a_k-n,k}$. As the CQ-NSGT analysis before, Algorithm~\ref{alg:slicqana} is complemented by a synthesis algorithm with similar structure, Algorithm~\ref{alg:slicqsyn}, that synthesizes a signal $\tilde{f}$ from a $2$-layer coefficient array $s$. 

\begin{algorithm}
\caption{sliCQ synthesis: $\tilde{f} = \textbf{isliCQ}_{L,N} (s,\tilde{h}_0,\tilde{\bd{g}},\bd{a})$}\label{alg:slicqsyn}
\begin{algorithmic}[1]
\State \textbf{Initialize} $s,\tilde{h}_0,\tilde{g}_k$ for all $k\in I_K$
\State $m \gets 0$
\State $\tilde{f} \gets \bd{0}_L$
\For{$m=0,\ldots L/N-1$}
  \State $l \gets (m\mod 2)$
  \For{$k\in I_K,~n^s = 0,\ldots,2N/a_k-1$}
    \State $c^m_{n^s,k} \gets s^l_{n^s+(m-1)N/a_k,k}$
  \EndFor
  \State $\tilde{f}^m \gets \textbf{iCQ-NSGT}_{2N}(c^m,\tilde{\bd{g}},\bd{a})$
  \For{$j=0,\ldots 2N-1$}
%     \State \textbf{add} $\tilde{f}_m[j]\tilde{h}_0[j-N]$ \textbf{to} $\tilde{f}[j+(m-1)N]$
    \State $\tilde{f}[j+(m-1)N] \gets$% \tilde{f}[j+(m-1)N]$
    \Statex \hspace{40pt} $\tilde{f}[j+(m-1)N] + \tilde{f}^m[j]\tilde{h}_0[j-N]$
  \EndFor
\EndFor
\end{algorithmic}
\end{algorithm}

The following proposition states that $f$ is perfectly recovered from its sliCQ coefficients by applying Algorithm \ref{alg:slicqsyn}, see Appendix \ref{sec:sliCQprop} for a proof.

\begin{Pro}\label{Prop:sliPerfRec}
  Let $\mathcal{G}(\bd{g},\bd{a})$ and $\mathcal{G}(\tilde{\bd{g}},\bd{a})$ be dual NSG systems for $\CC^{2N}$. Further let $h_0,\tilde{h}_0 \in \CC^L$ satisfy 
  \begin{equation}\label{dualwincond}
   \sum_{m=0}^{L/N-1} \bd{T}_{mN} \left(h_0\overline{\tilde{h}_0}\right) \equiv 1.
  \end{equation}
 If $s$ is the output of $\textbf{\emph{sliCQ}}_{L,N} (f,h_0,\bd{g},\bd{a})$ (Algorithm \ref{alg:slicqana}), then the output $\tilde{f}$ of $\textbf{\emph{isliCQ}}_{L,N} (s,\tilde{h}_0,\tilde{\bd{g}},\bd{a})$ (Algorithm \ref{alg:slicqsyn}) equals $f$,~i.e., 
    $\tilde{f} = f$.
\end{Pro}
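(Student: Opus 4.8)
The plan is to unwind the two algorithms and show that the composition of analysis and synthesis reduces to an overlap-add identity that collapses to $f$ because of the dual-window condition \eqref{dualwincond} and the perfect reconstruction already established in Proposition~\ref{pro:reconst}. First I would observe that the re-arrangement of the slice coefficients $c^m$ into the two-layer array $s$ and the subsequent partitioning of $s$ back into $c^m$ are mutually inverse: the map in line~9 of Algorithm~\ref{alg:slicqana}, $s^l_{n^s+(m-1)N/a_k,k}\gets c^m_{n^s,k}$ with $l=m\bmod 2$, is exactly undone by line~6 of Algorithm~\ref{alg:slicqsyn}. One should check that no collisions occur in this assignment, i.e.\ that two slices $m$ and $m'$ of the same parity never write to the same position $(n,k)$ in $s^l$; since consecutive same-parity slices are offset by $2N/a_k$ in the $n$-index and each slice occupies $2N/a_k$ consecutive indices, the writes for a fixed parity tile the index range $\ZZ_{L/a_k}$ without overlap (using that $L$ is a multiple of $2N$). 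Hence after the round trip through $s$ we recover, for every $m$, exactly the coefficient block $c^m$ that was produced by $\textbf{CQ-NSGT}_{2N}(f^m,\bd{g},\bd{a})$.

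Next I would invoke Proposition~\ref{pro:reconst} applied on $\CC^{2N}$: since $\mathcal{G}(\bd{g},\bd{a})$ and $\mathcal{G}(\tilde{\bd{g}},\bd{a})$ are dual NSG systems for $\CC^{2N}$, the synthesis step $\tilde{f}^m=\textbf{iCQ-NSGT}_{2N}(c^m,\tilde{\bd{g}},\bd{a})$ returns precisely the $m$-th slice, $\tilde{f}^m = f^m$, where by the definition in line~5 of Algorithm~\ref{alg:slicqana} we have $f^m[j]=(\bd{T}_{mN}h_0)[j+(m-1)N]\cdot f[j+(m-1)N]$ for $j=0,\dots,2N-1$ (indices circular mod $L$). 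The remaining task is then the overlap-add loop in Algorithm~\ref{alg:slicqsyn}: it computes
\begin{equation*}
  \tilde f[p] \;=\; \sum_{m=0}^{L/N-1} \tilde f^m[p-(m-1)N]\,\tilde h_0[p-(m-1)N-N],
\end{equation*}
for each sample index $p$, all indices taken circularly. Substituting $\tilde f^m = f^m$ and then the formula for $f^m$, each summand becomes $f[p]\cdot (\bd{T}_{mN}h_0)[p]\cdot \overline{\tilde h_0}[\,\cdot\,]$ — here one must carefully track that the shift $\tilde h_0[j-N]$ in the synthesis window lines up with the $h_0$ evaluation so that the product of windows appearing is exactly $(\bd{T}_{mN}(h_0\overline{\tilde h_0}))[p]$ (modulo the normalization/conjugation conventions the paper uses for $h_0,\tilde h_0$). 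Summing over $m$ and applying \eqref{dualwincond} gives $\tilde f[p] = f[p]\sum_{m=0}^{L/N-1}(\bd{T}_{mN}(h_0\overline{\tilde h_0}))[p] = f[p]$, as desired.

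The main obstacle I expect is purely bookkeeping: getting the index arithmetic consistent across the three shift conventions in play — the circular convention ``$f[-j]:=f[L-j]$'' and ``$s^l_{-n,k}:=s^l_{L/a_k-n,k}$'', the ``$+(m-1)N$'' offsets that can run negative for $m=0$, and the fact that the slicing window $h_0$ is centered at $0$ while the synthesis loop writes $\tilde f^m[j]\tilde h_0[j-N]$. I would handle this by fixing once and for all that all arrays are indexed over $\ZZ_L$ (resp.\ $\ZZ_{2N}$, $\ZZ_{L/a_k}$) with addition mod the relevant length, rewriting line~5 of Algorithm~\ref{alg:slicqana} and the overlap-add line of Algorithm~\ref{alg:slicqsyn} as honest circular convolutions/translations, and then the window product identity and the appeal to \eqref{dualwincond} become transparent. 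A secondary point worth one sentence is that $L/a_k$ must be an integer multiple of $N/a_k$ for the re-indexing in the $s$-array to be well defined — this is exactly the standing assumption that $L$ is a multiple of $2N$, so no extra hypothesis is needed.
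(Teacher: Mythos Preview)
Your proposal is correct and follows essentially the same two-step route as the paper: invoke Proposition~\ref{pro:reconst} on each slice to recover $\tilde f^m=f^m$, then collapse the windowed overlap-add via \eqref{dualwincond}. You are in fact more careful than the paper, which silently assumes that the rearrangement $c^m\to s\to c^m$ is invertible; your tiling argument (same-parity slices occupy disjoint blocks of length $2N/a_k$ in $\ZZ_{L/a_k}$) fills that gap, and your remark about the conjugation convention on $\tilde h_0$ is apt, since the algorithm writes $\tilde h_0[j-N]$ while the proof uses $\bd{T}_{mN}\overline{\tilde h_0}$.
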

\subsection{The relation between CQ-NSGT and sliCQ}\label{sec:CQ_SLICQ}
%previous label: {sec:TimeAlias} and {sec:sliCQconnCQ}
To maintain perfect reconstruction in the final overlap-add step in 
Algorithm~\ref{alg:slicqsyn}, we assume 
 \begin{equation}\label{eq:BUPU}
    h_m = \bd{T}_{mN} h_0 \text{ with } \sum_{m=0}^{L/N-1} h_m\equiv 1,
  \end{equation}
  %where $h_0$ is a Tukey window centered at $0$, symmetrically zero-padded to length $2N$ (see Figure~\ref{fig:tukeywins}) 
  and use a dual window $\tilde{h}_0$ satisfying \eqref{dualwincond} in the synthesis process.
  
  Another obvious option for the design of the slicing windows is to require
$\sum_m h_m^2\equiv 1$, which would allow for using the same windows in the final overlap-add step. However, if we want to approximate the true CQ-coefficients as obtained from a full-length transform, \eqref{eq:BUPU} is the more favorable condition.

 In our implementation, \emph{slicing} of the signal is accomplished 
  by a uniform partition of unity constructed from a Tukey window $h_0$ with essential length $N$ and transition areas
  of length $M$, for some $N,M\in\NN$ with $M < N$ (usually $M\ll N$). The slicing windows are symmetrically zero-padded to length $2N$, reducing time-aliasing significantly.
  %Zero-padding is applied to minimize time-aliasing when coefficient-modification occurs; 
  The uniform partition condition \eqref{eq:BUPU} leads to close approximation of the full-length CQ-NSGT by sliCQ. This correspondence between the sliCQ and the corresponding full-length CQ-NSGT is made explicit in the following proposition, proven in Appendix~\ref{sec:sliCQprop}.
\begin{Pro}\label{pro:CQ_sliCQ}
  Let $\mathcal{G}(\bd{g}^\mathcal{L},\bd{a})$ be a nonstationary Gabor system for $\CC^L$. 
   Further, let  $h_0\in\CC^L$ be such that \eqref{eq:BUPU} holds
  and define $g_k\in\CC^{2N}$, for all $k\in I_K$ by
  \begin{equation*}
    g_k[j] = g^\mathcal{L}_k[jL/(2N)].
  \end{equation*}
For $f\in\CC^L$, denote by $c\in\mathbb{C}^{L/a_k\times |I_K|}$ the CQ-NSGT coefficients of $f$ with respect to $\mathcal{G}(\bd{g}^\mathcal{L},\bd{a})$
  and by $s\in\mathbb{C}^{2\times L/a_k\times |I_K|}$ the sliCQ coefficients of $f$ with respect to $h_0$ and $\mathcal{G}(\bd{g},\bd{a})$. Then 
  \begin{align}
    \lefteqn{|s^0_{n,k}+s^1_{n,k}-c_{n,k}|}\nonumber\\% \label{eq:COEFFsum}
      & \leq \|f\|_2 \Big(\|(1-h_{0}-h_{1})\bd{T}_{n^s a_k}\widecheck{g^\mathcal{L}_k}\|_2 \nonumber \\
      & + \|(h_0+h_{1})\sum_{j= 1}^{\frac{L}{2N}-1} \bd{T}_{n^s a_k+2jN} \widecheck{g^\mathcal{L}_k}\|_2\Big) \label{eq:inner_outer}
  \end{align}
  for $n = mN/a_k +n^s$, with $m = 0,\ldots, L/N-1$ and $n^s = 0,\ldots, N/a_k-1$. 
\end{Pro}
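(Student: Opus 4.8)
The plan is to express both sides of the claimed inequality via the short-time Fourier transform / NSG-coefficient formula \eqref{eq:CQ_coeff}, and then to track exactly which part of the signal each quantity ``sees''. First I would recall that, by Proposition~\ref{pro:reconst}-type bookkeeping, the full-length coefficient is $c_{n,k} = \langle f, \bd{T}_{n^s a_k}\widecheck{g^\mathcal{L}_k}\rangle$ (with the appropriate identification $n = mN/a_k + n^s$), while the sliCQ coefficient $c^m_{n^s,k}$ computed inside slice $m$ is $\langle f^m, \bd{T}_{n^s a_k}\widecheck{g_k}\rangle$, where $f^m = \bd{T}_{mN}(h_0)\cdot f$ restricted and periodized to length $2N$. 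The key structural fact is that, because each time index lies under exactly two consecutive slicing windows (the half-overlap situation), $s^0_{n,k}+s^1_{n,k}$ collects precisely the contributions of the two slices $m$ and $m+1$ (or $m-1$ and $m$, depending on parity) whose windows $h_m, h_{m+1}$ are supported near the time $n^s a_k$. So the first step is to write $s^0_{n,k}+s^1_{n,k}$ as a single inner product against $(h_m + h_{m+1})\, f$, sampled with the \emph{downsampled} window $\widecheck{g_k}$.

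Next I would reduce to the case $m=0$ (hence $h_m+h_{m+1} = h_0 + h_1$) by translation-invariance of the whole construction — a shift by $mN$ in time moves slice $m$ to slice $0$ and, thanks to \eqref{eq:BUPU} being a uniform partition, does not disturb the structure — so that the estimand becomes $|\langle (h_0+h_1)f,\ \widecheck{g_k}^{\,(2N)}\text{-periodized}\rangle - \langle f,\ \bd{T}_{n^s a_k}\widecheck{g^\mathcal{L}_k}\rangle|$. Then I would replace the length-$2N$ window $\widecheck{g_k}$ by the length-$L$ window $\widecheck{g^\mathcal{L}_k}$: the hypothesis $g_k[j] = g^\mathcal{L}_k[jL/(2N)]$ says $g_k$ is a pointwise decimation of $g^\mathcal{L}_k$ in frequency, which by the standard Fourier-analytic dictionary (decimation in frequency $\leftrightarrow$ periodization in time) means $\widecheck{g_k}$ is, up to the length rescaling, the $2N$-periodization $\sum_{j} \bd{T}_{2jN}\widecheck{g^\mathcal{L}_k}$ of $\widecheck{g^\mathcal{L}_k}$. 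Inserting this and using that $(h_0+h_1)f$ is supported in a window of length essentially $2N$ (so the periodization wrap-around terms can be made explicit), the difference splits into exactly two error terms: one coming from the mismatch $1 - (h_0+h_1)$ of the partition — i.e.\ the part of $f$ outside the two active slices, weighted against $\bd{T}_{n^s a_k}\widecheck{g^\mathcal{L}_k}$ — and one coming from the time-aliasing tails $\sum_{j=1}^{L/(2N)-1}\bd{T}_{n^s a_k + 2jN}\widecheck{g^\mathcal{L}_k}$ of the periodized window, weighted against $(h_0+h_1)f$.

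Finally I would apply Cauchy--Schwarz to each of the two inner-product error terms, pulling out $\|f\|_2$, and moving the bounded multiplier $h_0+h_1$ (respectively $1-h_0-h_1$, which is also bounded by $1$ since $0\le h_m\le 1$ under \eqref{eq:BUPU}) onto the window factor. This yields exactly the two norms $\|(1-h_0-h_1)\bd{T}_{n^s a_k}\widecheck{g^\mathcal{L}_k}\|_2$ and $\|(h_0+h_1)\sum_{j=1}^{L/(2N)-1}\bd{T}_{n^s a_k+2jN}\widecheck{g^\mathcal{L}_k}\|_2$ appearing in \eqref{eq:inner_outer}, completing the estimate.

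The main obstacle I anticipate is the careful bookkeeping in the first two steps: correctly matching the slice-local index $n^s$ and slice number $m$ to the global index $n$, making the ``exactly two overlapping slices'' statement precise for a general Tukey partition (including what happens when $n^s a_k$ is near a slice boundary), and — most delicately — justifying the identification of the length-$2N$ decimated window $\widecheck{g_k}$ with the $2N$-periodization of the length-$L$ window $\widecheck{g^\mathcal{L}_k}$ including the correct normalization constants from the FFT conventions in Algorithms~\ref{alg:nsganalysis}--\ref{alg:nsgsynthesis}. Once that dictionary is set up cleanly, the splitting into the two named error terms and the final Cauchy--Schwarz step are routine.
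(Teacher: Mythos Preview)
Your proposal is correct and follows essentially the same route as the paper: the paper too expresses $c^m_{n^s,k}$ as an inner product over $\CC^L$ against $h_m$ times the $2N$-periodization $\sum_j \bd{T}_{n^s a_k+(m-1+2j)N}\widecheck{g^\mathcal{L}_k}$, combines the two overlapping slices to obtain $s^0_{n,k}+s^1_{n,k}=\langle f,(h_m+h_{m+1})\sum_j\ldots\rangle$, splits off the $j=0$ term to recover $c_{n,k}$, and finishes with Cauchy--Schwarz. The only cosmetic difference is ordering: the paper carries a general $m$ through the computation and invokes independence from $m$ at the very end, whereas you translate to $m=0$ up front.
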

\begin{Rem}
 In practice, $\widecheck{g^\mathcal{L}_k}$ is chosen such that the translates $\bd{T}_{na_k} \widecheck{g^\mathcal{L}_k}$ are \emph{essentially concentrated} in 
  \begin{equation*}
    I_{N,M} = [-\frac{N-M}{2},N+\frac{N-M}{2}],
  \end{equation*}
 i.e. $\|\bd{T}_{na_k} \widecheck{g^\mathcal{L}_k}\chi_{\RR\setminus I_{N,M}}\|_2 \ll \|\bd{T}_{na_k} \widecheck{g^\mathcal{L}_k}\|_2$, for all $n = 0,\ldots, N/a_k-1$. Therefore, the value of \eqref{eq:inner_outer} is negligibly small. While more precise %quantitative 
 estimates of the error are beyond the scope of the present contribution, numerical  evaluation of the approximation quality is given in Section~\ref{ssec:approx}.
\end{Rem}
  
  As a consequence of the previous proposition, we define the \emph{sliCQ spectrogram} as $|s^0+s^1|^2$ and propose to simultaneously treat $s^0_{n,k}$ and $s^1_{n,k}$, corresponding to the same time-frequency position, when processing the coefficients.
  
\section{Numerical Analysis and Simulations}\label{sec:NUMSIM}

In this section we treat the computational complexity 
of  CQ-NSGT and
sliCQ  and how they compare to one another. In \cite{dogrhove11} it was shown that despite superlinear complexity,
	 CQ-NSGT outperforms state-of-the-art implementations of the classical constant-Q
transform. Since sliCQ is a
linear cost algorithm, it  further improves 
the efficiency of the CQ-NSGT for sufficiently long signals.
 Section~\ref{ssec:approx}  provides experimental results
confirming  the good approximation of CQ-NSGT  by the corresponding sliCQ 
coefficients, cf.~Proposition~\ref{pro:CQ_sliCQ}.

The CQ-NSGT and sliCQ Toolbox (for MATLAB and Python) used in this contribution
is available  at \url{http://www.univie.ac.at/nonstatgab/slicq}, 
alongside extended experimental results complementing those presented in Section 
\ref{sec:EXPERI}. 

\subsection{Computation Time and Computational Complexity}
 We assume  the number of filters $|I_K|$ in the CQ-NSGT to be independent of the signal length $L$ and Proposition \ref{pro:NSGpainless} to hold, in particular $L/a_k \geq L_k$.  The support size $L_k$ of each filter $g_k$ depends on $L$. Hence, the number of operations for Algorithm \ref{alg:nsganalysis} is as follows:
\begin{equation*}
  \OO \Big(\underbrace{L~\log\left(L\right)}_{\textbf{FFT}_L} +\sum \limits_{k\in I_K} \underbrace{L/a_k~\log\left(L/a_k\right)}_{\textbf{IFFT}_{L/a_k}}+\underbrace{L_k}_{f\cdot\overline{g_k}}\Big).
\end{equation*}
With $L_k$ and $L/a_k$ bounded by $L$, this can be simplified to $\OO (L~\log L)$. %We disregard the optional multiplication by $\sqrt{L/a_k}$, canceled in the first step of Algorithm \ref{alg:nsgsynthesis}.

The computation of the dual frame involves inversion of the multiplication operator $\mathbf{S}$ and applying the resulting operator $\mathbf{S}^{-1}$ to each filter. This results in $\OO(2\sum_{k\in I_K} L_k) =  \OO(L)$ operations, where the support of the $g_k$ was taken into account.

Complexity of Algorithm \ref{alg:nsgsynthesis} can be derived to be $\OO (L~\log L)$, analogous to Algorithm \ref{alg:nsganalysis}.\\

For $\textbf{sliCQ}_{L,N}$ (Algorithm \ref{alg:slicqana}), we assume the slice length $2N$ to be independent of $L$, resulting in a computational complexity of 
\begin{equation*}
  \OO \Big(\underbrace{L/N}_{\#\text{slices}}\cdot\big(\underbrace{2N ~\log\left(2N\right)}_{\textbf{CQ-NSGT}_{2N}} +\underbrace{2N}_{f\cdot\bd{T}_{mN}\overline{h_0}}\big)\Big) = \OO (L).
\end{equation*}
Both the dual frame and $\tilde{h}_0$ can be precomputed independent of $L$, whilst Algorithm \ref{alg:slicqsyn} is of complexity $\OO (L)$, analogous to Algorithm \ref{alg:slicqana}.

\subsection{Performance evaluation}

\begin{figure}[t!]
  \begin{center}
  \includegraphics[width=8.5cm]{./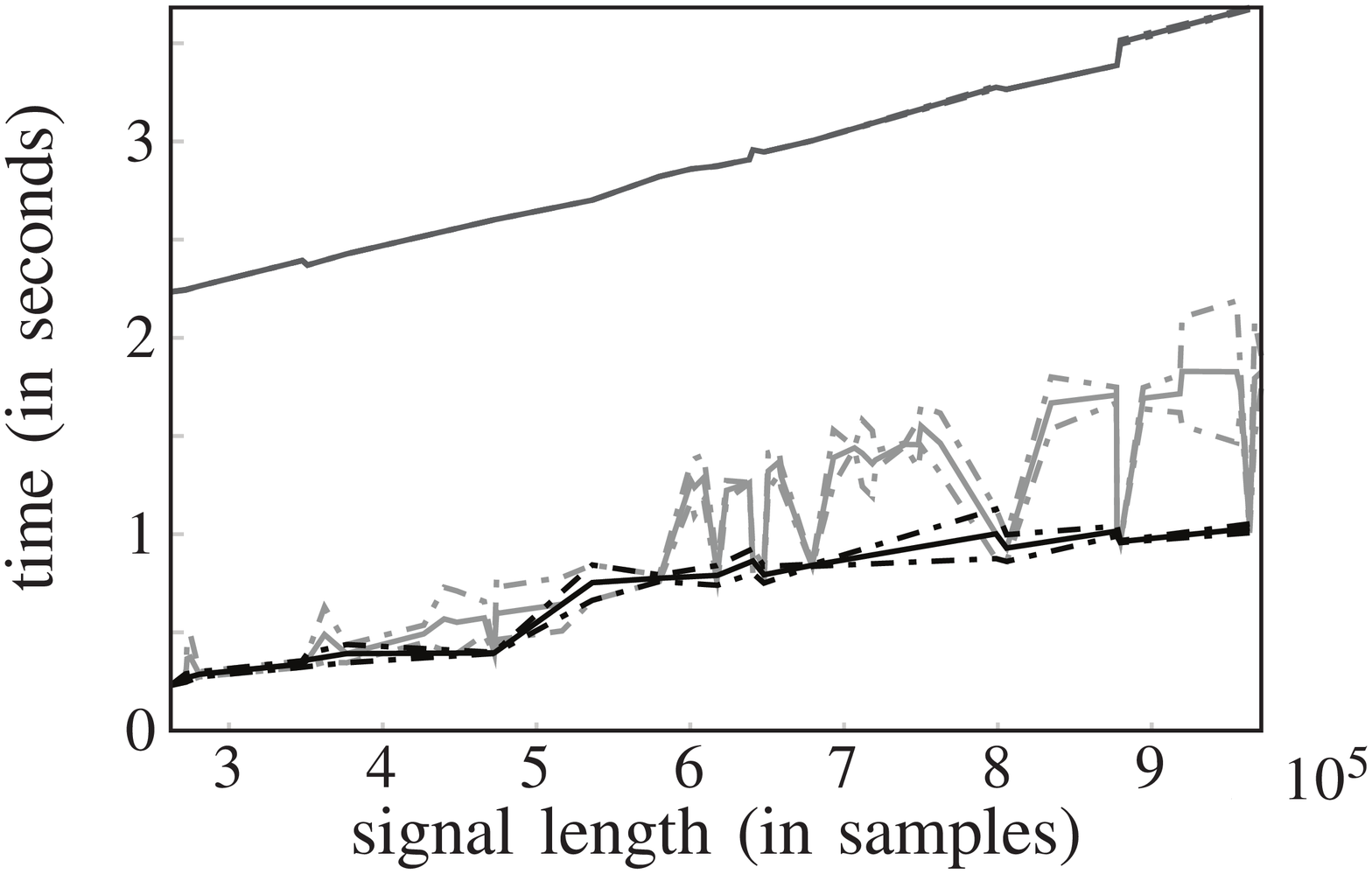}%\input{cqvsnsgtsigl.tex}
  \vspace{4pt}
  \end{center}
  \caption{Computation time versus signal length of the CQ transform (dark gray) and CQ-NSGT. For the CQ-NSGT we show separate
           graphs including (light gray), respectively neglecting prime signal lengths (black). Graphs show the mean performance (solid) and variance (dashed)
           over 50 iterations.}\label{fig:CQvsNSGT}
\end{figure}
\begin{figure}[t!]
  \begin{center}
  \includegraphics[width=8.5cm]{./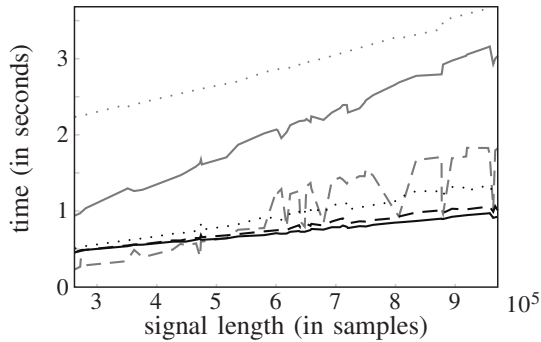}%\input{cqvsslicqsigl.tex}
  \vspace{4pt}
  \end{center}
  \caption{Computation time versus signal length of the CQ transform (dotted gray), CQ-NSGT (dashed gray) and various sliCQ transforms.
          The sliCQ transforms were taken with slice lengths 4096 (solid gray), 16384 (dotted black), 32768 (dashed black) and 65536 (solid black)
          samples.}\label{fig:NSGTvsSLICQ}
\end{figure}

A comparison  of the CQ-NSGT algorithm with previous constant-Q implementations was given  in \cite{dogrhove11}. Figure~\ref{fig:CQvsNSGT} reproduces and extends some of the results; it shows, for both the 
constant-Q implementation provided in \cite{klsc10} and  CQ-NSGT, mean computation duration and variance for analysis followed by reconstruction, against signal length. 
%The stability of the results is illustrated by showing both the mean computation time and variance of 50 iterations  for each signal length tested. 
The plot also illustrates the dependence of CQ-NSGT on the prime factor decomposition of the signal length $L$.

Figure \ref{fig:NSGTvsSLICQ} illustrates the performance of sliCQ  compared to the constant-Q and CQ-NSGT algorithms shown in Figure~\ref{fig:CQvsNSGT}. Linearity 
of the sliCQ algorithm becomes obvious, deviations occurring due to unfavorable FFT lengths $2N/a_k$ in $\textbf{(i)CQ-NSGT}_{2N}$. Performance improvements for increasing slice length can be attributed to the advanced nature of MATLAB's internal FFT algorithm, as compared to the current implementation of the sliCQ framework.

The performance of the involved algorithms does not depend on signal content. Consequently, random signals were used in the performance experiments, although we implicitly assumed the signals
to be sampled at $44.1$~kHz. All the results represent transforms with $48$ bins per octave, minimum frequency $50$~Hz and maximum frequency $22$~kHz, in Section \ref{sec:EXPERI} a maximum frequency of $20$~kHz is used instead. For a more comprehensive comparison 
of the CQ-NSGT to previous constant-Q transforms, please refer to \cite{dogrhove11}. Results for other parameter values do not differ drastically and are omitted.

All computation time experiments were run in MATLAB R2011a on a 3 Gigahertz Intel Core 2 Duo machine with 2 Gigabytes of RAM running Kubuntu 10.04 using the MATLAB toolboxes available at \url{http://www.elec.qmul.ac.uk/people/anssik/cqt/} and \url{http://www.univie.ac.at/nonstatgab/}.

\subsection{Approximation properties}\label{ssec:approx}

\begin{figure}[t!]
  \begin{center}
  \includegraphics[width=6cm]{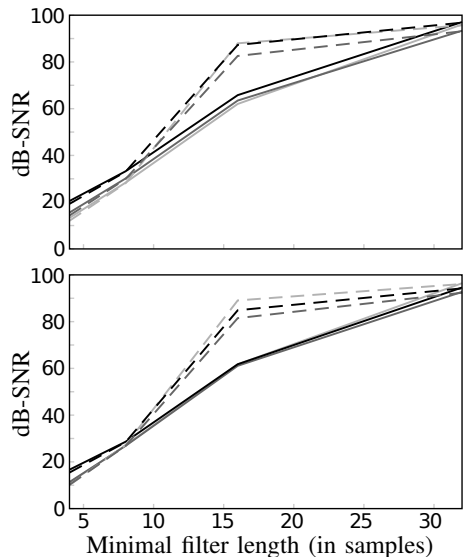}
  \end{center}
  \caption{SliCQ coefficient approximation error against the minimal admissible bandwidth for Set $1$ (top) and Set $2$ (bottom). All transforms use 
           Blackman-Harris windows in the CQ-NSGT step. Solid and dashed lines represent long ($1/4$ slice length) and short ($1/128$ slice length) transition areas respectively, while 
           colors correspond to the slice length: $4096$ (light gray), $16384$ (dark gray) and $65536$ samples (black).}
  \label{fig:CoeffQual}
\end{figure}

To verify the approximate equivalence of the sliCQ coefficients to those of a
full-length CQ-NSGT and thus to a constant-Q transform,  we computed the norm difference between $s^0+s^1$ and $c$ as in Proposition \ref{pro:CQ_sliCQ}, for two sets of fundamentally different signals. Set $1$ contains $50$ random, complex-valued signals of $2^{20}$ samples length,
while Set $2$ consists of $90$ music samples of the same length, sampled at $44.1$~kHz each, covering pop, rock, jazz and classical genres. The signals of the second set
are well-structured and often well-concentrated in the time-frequency plane, characteristics that the first set lacks completely. 

For discretization reasons as well as to achieve good concentration of $\widecheck{g^\mathcal{L}_k}$ in Proposition \ref{pro:CQ_sliCQ}, sliCQ implementations must impose a lower bound on the length of $g_k$.
Approximation results for various lower bounds on the filter length are summarized in Figure~\ref{fig:CoeffQual}, showing the mean approximation quality over the whole set.
%Set $2$ shows some variance that can be attributed to signals particularly sparse or dense in low frequency regions, where a large portion of the error originates.

%Figure~\ref{fig:CoeffQual3} shows the approximation results over all $90$ test signals for different minimal bandwidths and a fixed set of parameters. 
%In this section, slice length always refers to the full (zero-padded) slice length $2N$.

All errors are given in signal-to-noise ratio, scaled in dB: 
\begin{equation*}
 20 \log_{10} \frac{\|c\|_2}{\|c-(s^0+s^1)\|_2}
\end{equation*}

Figure~\ref{fig:CoeffQual} shows that, independent of other parameters, a minimal filter length smaller than
$8$ samples leads to a representation that is visibly different from, while values above $16$ samples yield coefficients that are largely equivalent to those of
a constant-Q transform. We can see that the slice length itself has rather small influence on the results, while the interplay of slicing window shape, specified by
the ratio of transition area length to slice length, and minimal filter length is illustrated nicely; remarkably, this ratio influences the approximation quality mainly for moderately well localized filters.
This is in correspondence with the characterization given in \eqref{eq:inner_outer}: the \emph{circular overspill}, given by the second term of the right hand side in \eqref{eq:inner_outer}, depends on the shape and support of the sum of two adjacent slicing windows, in particular for moderately well localized filters. If the windows are very well localized, the overspill 
is   small independent of the particular shape of the slicing area. On the other hand, very badly localized windows make the distinct influence of the slicing windows negligible.  
Finally, a comparison of the top and bottom graphs in Figure~\ref{fig:CoeffQual} shows that the approximation quality is largely independent of the signal class. For Set $1$ the variance is generally negligible ($<0.1$~dB) and was omitted. Despite some outliers in Set $2$, we have found the approximation quality to depend on the minimal filter length in a stable way, cf. Figure~\ref{fig:CoeffQual3}. These outliers can be attributed to signals particularly sparse (smaller error) or dense (larger error) in low frequency regions, where $\widecheck{g_k^\mathcal{L}}$ is least concentrated.
%A graph illustrating the approximation error over all signals in Set $2$ can be found on the associated webpage, alongside a short discussion.

% In Figure \ref{fig:CoeffQual3} we see that, despite some outliers, the approximation quality depends on the minimal filter length in a stable way. Outliers can be expected to be signals with especially 
%low or high energy in low frequency regions, since the circular overspill specified in \eqref{eq:inner_outer} is largely restricted to frequency bins with low time resolution.

 \begin{figure}[t!]
  \begin{center}
  \includegraphics[width=8.5cm]{./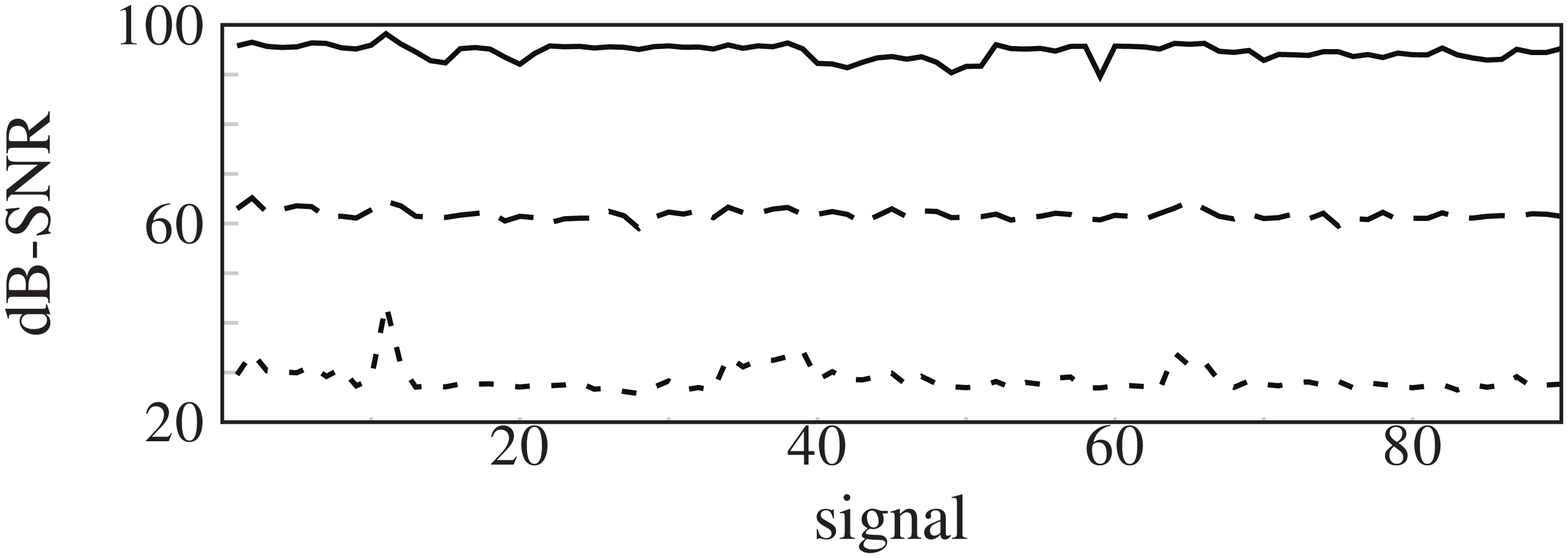}%\input{quality_over_signals.tex}
  \end{center}
  \caption{Coefficient approximation error \eqref{eq:inner_outer} for all signals from Set $2$ and slice and transition length of $65536$, resp. $16384$ samples. Line style indicates 
           the minimal filter length: $8$ (dotted), $16$ (dashed) and $32$ (solid) samples.}
  \label{fig:CoeffQual3}
\end{figure}

\section{Experiments on Applications}\label{sec:EXPERI}\texttt{}

Experiments in \cite{dogrhove11} show how the CQ-NSGT can be applied in the processing of signals taking advantage of the logarithmic frequency scaling and the perfect reconstruction property. In particular, the transposition of a harmonic structure amounted to just a translation of the spectrum along frequency bins, while the masking of the CQ-NSGT coefficients allowed for the extraction or suppression of a component of the signal. In our experiment, we show that the two procedures can be used to modify a portion of a signal.

\begin{figure}[t!]
\begin{center}
\includegraphics[width=7.5cm]{./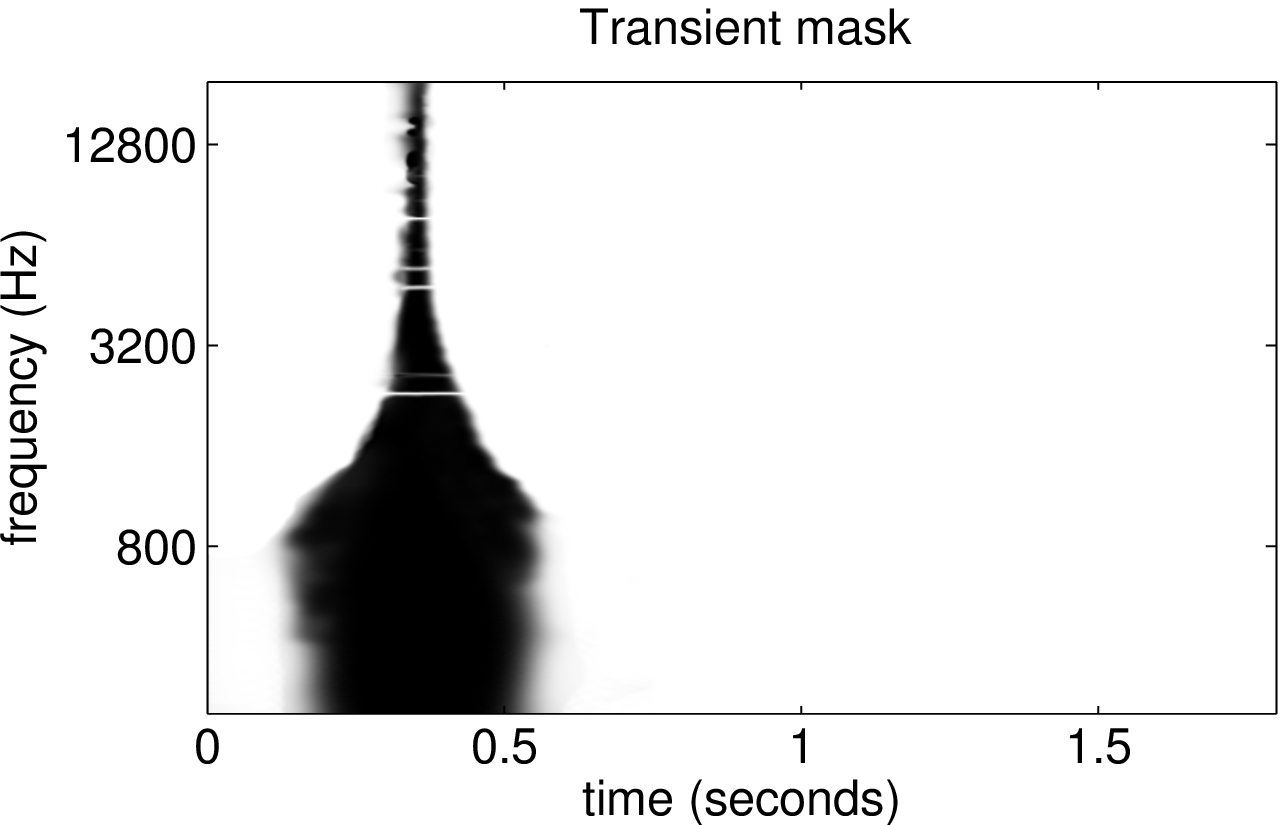}

\vspace{.5cm}
\includegraphics[width=7.5cm]{./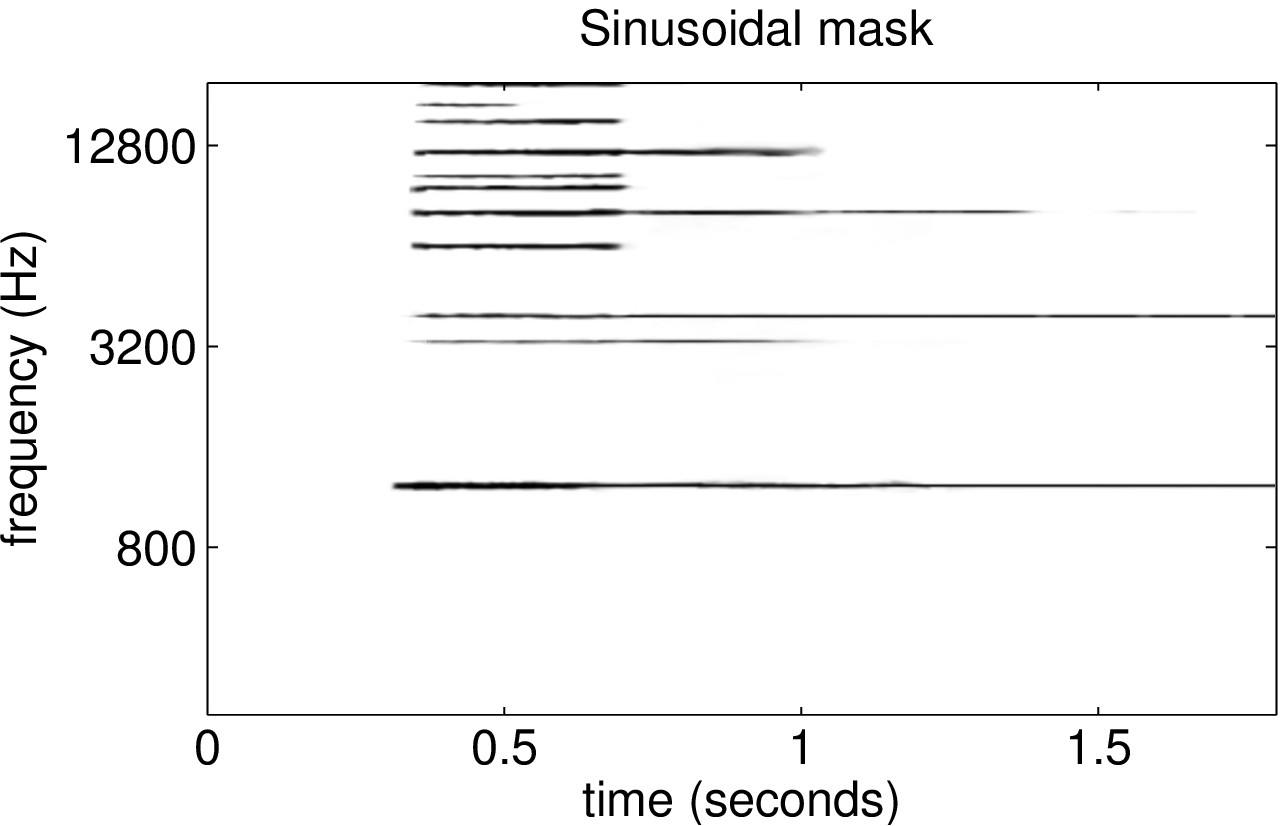}
\end{center}
%  \vspace{2.0cm}
%  \centerline{(a) Result 1}\medskip
\caption{Masks for extracting a transient (top) and sinusoidal component (bottom) of the Glockenspiel signal. The gray level plot describes the amplitude of the mask, with black and white representing $1$ and $0$, respectively.}
\label{fig:themasks}
\end{figure}

Figure \ref{fig:themasks} shows masks for isolating a transient part and the corresponding sinusoidal part of a Glockenspiel signal, created using an ordinary image manipulation program. Therein, the layers paradigm has been used to be able to quickly switch on and off the masks in order to accurately adapt them to the CQ-NSGT representation of the audio. An ``inverse mask'' is also constructed for the remainder part of the signal, essentially decomposing the signal into transient, sinusoidal and background portions. The masks have been drawn in the logarithmic domain, to be able to handle the dynamics of the audio. They are linearly scaled in dB units, so that $0$ in the mask corresponds to $10^{-5}$ ($-100$~dB) and $1$ corresponds to $1$ ($0$~dB).

\begin{figure}[t!]
\begin{center}
\includegraphics[width=8.5cm]{./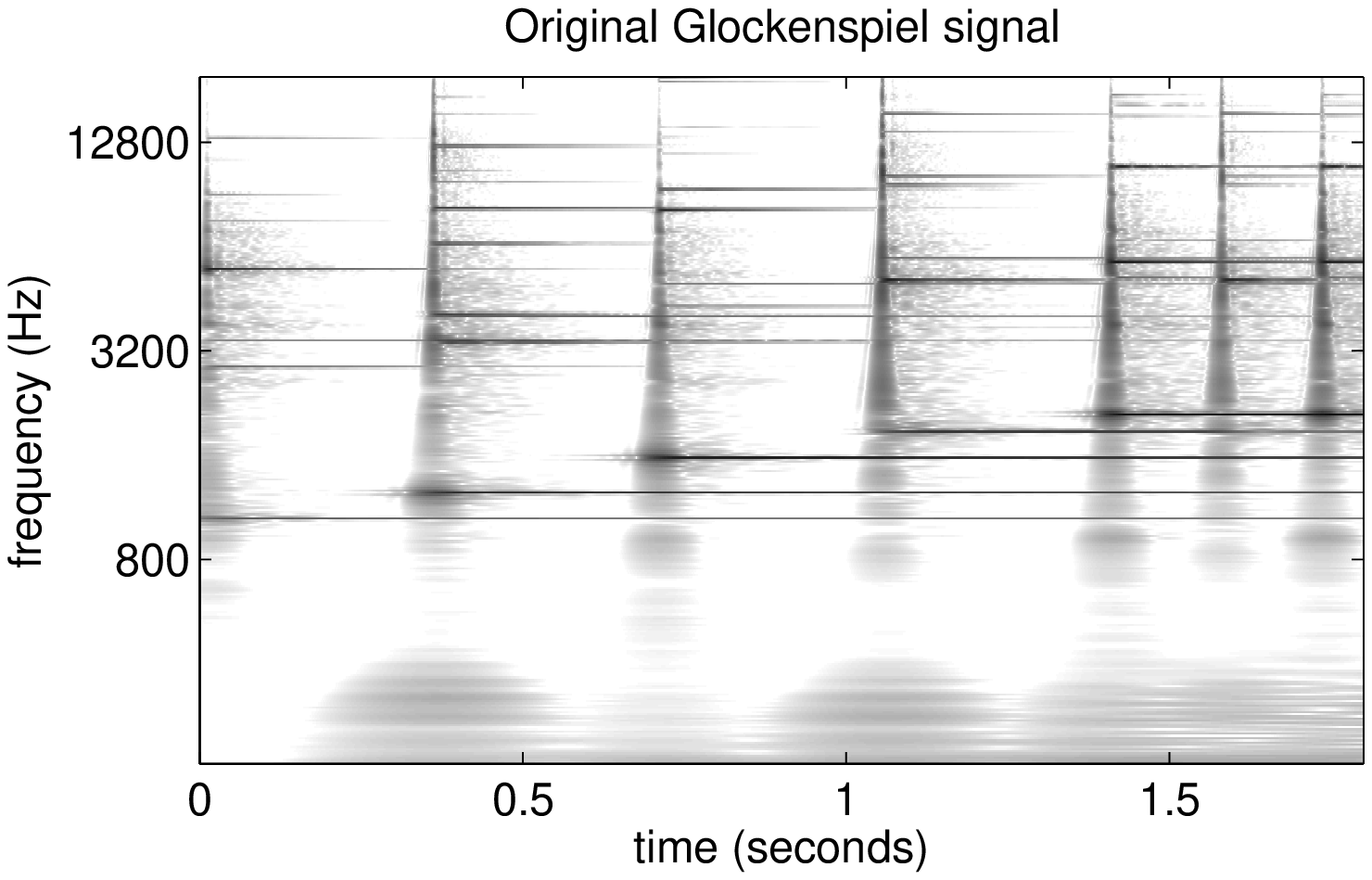}

\vspace{.5cm}
\includegraphics[width=8.5cm]{./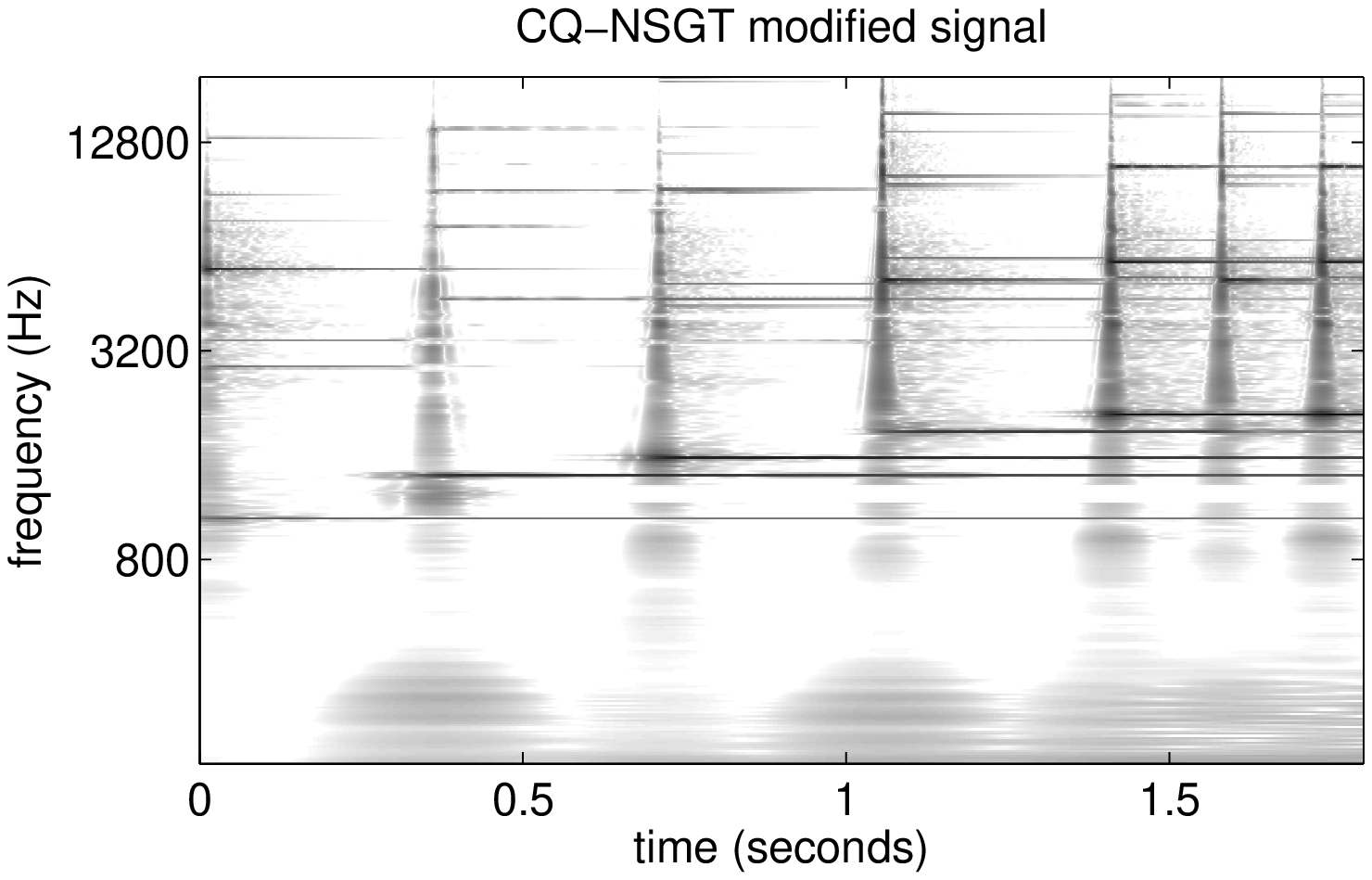}
\end{center}
%  \vspace{2.0cm}
%  \centerline{(a) Result 1}\medskip
\caption{CQ-NSGT spectrograms showing an excerpt of the Glockenspiel signal before (top) and after transposition of a component (bottom).}
\label{fig:transpose}
\end{figure}

While keeping the transient part, the isolated sinusoidal component of the signal is transposed upward by $2$ semitones, corresponding to $8$ frequency bins. The transient, the remainder, and the modified sinusoidal coefficients are then added and the inverse transform is applied to obtain the resulting processed signal.
For ease of use, this process is done with a rectangular representation of the slices, 
obtained by choosing $L/a_k$ constant for all frequency bands
which corresponds to a sinc-interpolation of the coefficients. 
%or equivalently just to sampling the Fourier transform more densely. 
%and is fast because of just employing an FFT with zero-padding.

Figure \ref{fig:transpose} compares the CQ-NSGT spectrograms of the original and the modified signal, while Figure \ref{fig:transpose2} shows the results for the same experiment using sliCQ transforms with different slice lengths. 
Note that the plots show the spectrogram of the synthesized signal, not the time-frequency coefficients before synthesis. Further, the exact same mask was used for CQ-NSGT and sliCQ transpositions. The sound files for this and other transposition experiments are available at \url{http://www.univie.ac.at/nonstatgab/slicq}. A script for the Python toolbox that executes the experiment, is available on the same page.

For synthesis, performed from modified coefficients, as opposed to mere reconstruction,
an evaluation of the results is a highly non-trivial matter.
This is due to the lack of a properly defined notion of \emph{accuracy} or the existence of a \emph{target signal}, not only for the algorithms presented here, but for any analysis/synthesis based signal processing framework. 
Thus, while the examples in this section should indicate that CQ-NSGT synthesis and sliCQ synthesis can produce results in accordance with intuition, an in-depth treatment of this subject is far beyond the scope of this article.
% 
% \begin{figure}[t!]
% \begin{center}
% \includegraphics[width=8.5cm]{./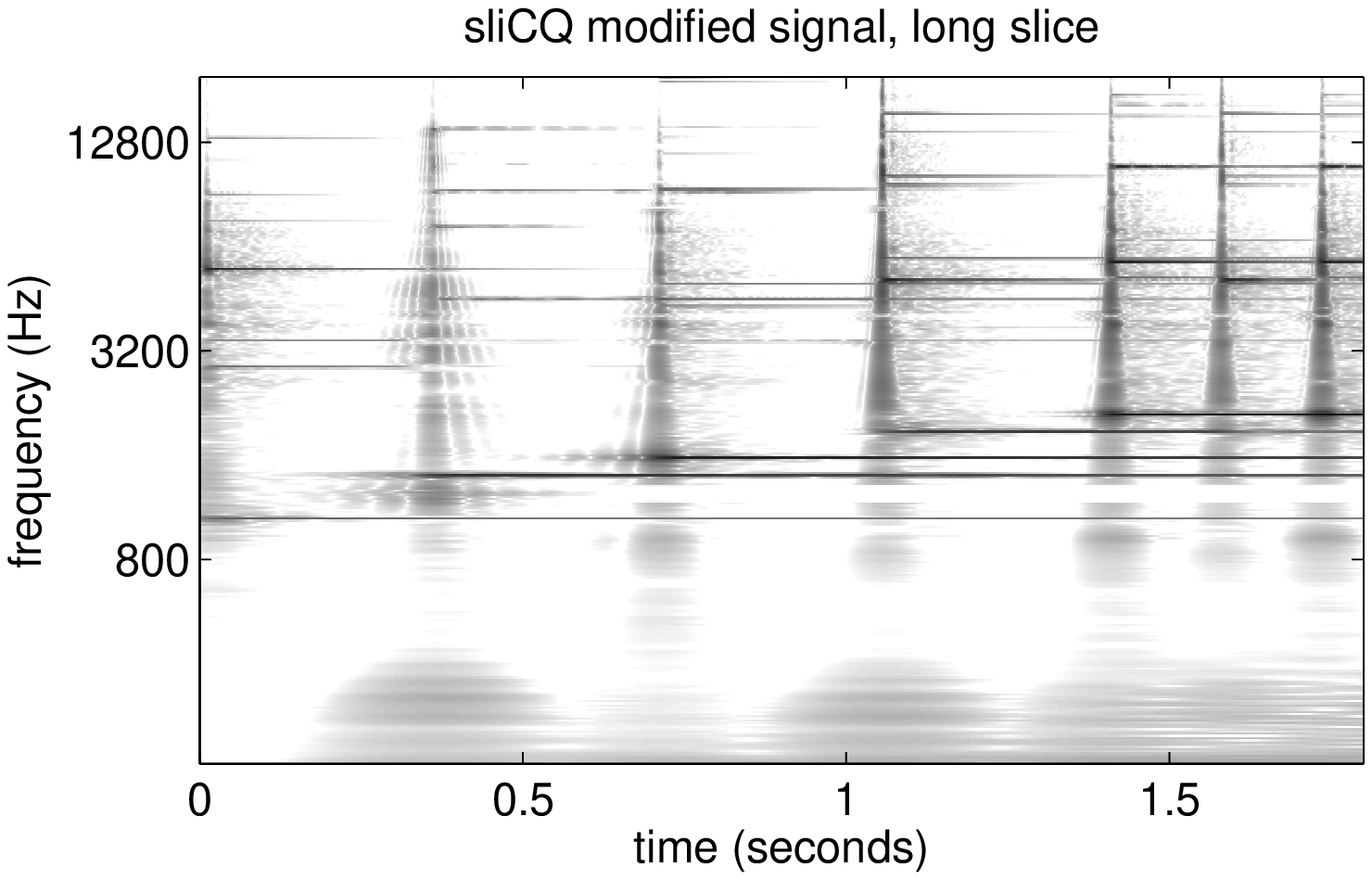}
% 
% \vspace{.5cm}
% \includegraphics[width=8.5cm]{./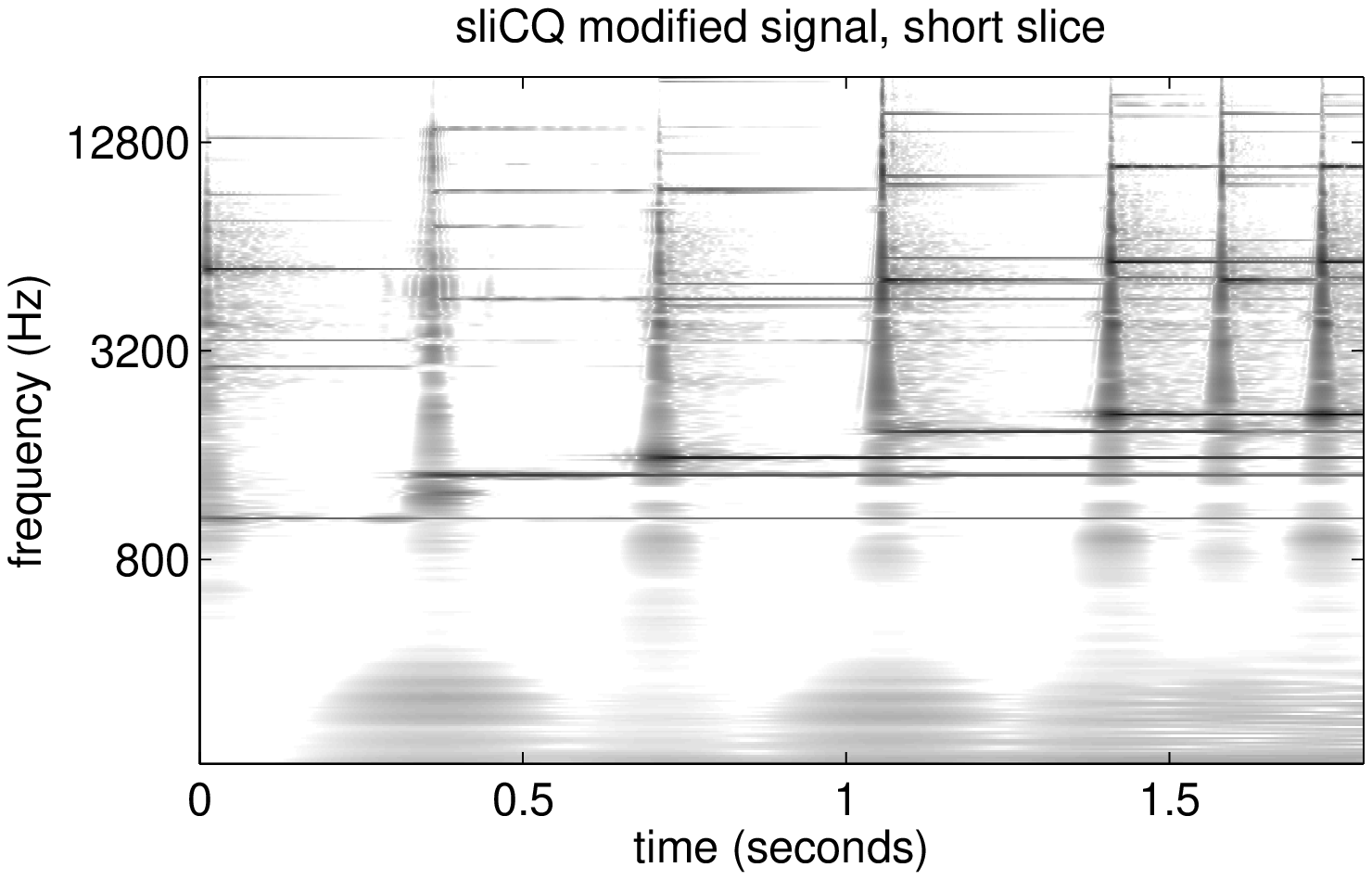}
% \end{center}
% %  \vspace{2.0cm}
% %  \centerline{(a) Result 1}\medskip
% \caption{sliCQ spectrograms showing an excerpt of the Glockenspiel signal after transposition of a component. The top plot was done with a slice length of $50000$ and a transition area of $20000$ samples, the bottom plot with a slice length of $5000$ and a transition area of $2000$ samples.}
% \label{fig:transpose2}
% \end{figure}

\section{Summary and Conclusion}\label{sec:CONCLU}

In this contribution, we have introduced a framework for real-time implementation of an invertible constant-Q transform based on frame theory. The proposed framework allows for straight-forward generalization to other non-linear frequency scales, such as mel- or Bark scale,~cp.~\cite{doevma12}. While real-time processing is possible by means of a preprocessing step, we investigated the possible occurrence of time-aliasing. We provided a numerical evaluation of computation time and quality of approximation of the true NSGT coefficients. 

In analogy to the classical phase vocoder, phase issues have to be addressed, if CQ-transformed coefficients are processed, cp.~\cite{dola97,dola99,Ro03}. While preliminary experiments using the proposed framework for real-life signals were presented, undesired phasing effects, mainly due to the contribution of a signal component to several adjacent filters, will be investigated in detail in future work. Furthermore, future work will consider the efficient realization of adaptivity in both time and frequency by varying the length of the preprocessing windows used for slicing.   

\section{Appendix}

\subsection{Derivation of CQ-NSGT properties}\label{sec:CQprop}
\begin{proof}[Proof of Proposition 1]
  By Algorithm \ref{alg:nsganalysis}, we have 
  \begin{align}
    \lefteqn{c_{n,k} = c_k[n]} \nonumber \\
    & = \sqrt{L/a_k}\frac{1}{L/a_k} \sum_{m=0}^{L/a_k-1}\sum_{l=0}^{a_k-1} (\hat{f}\overline{g_k})[m+l\frac{L}{a_k}]e^{2\pi inm a_k/L} \nonumber \\
    & = \sum_{m=0}^{L/a_k-1}\sum_{l=0}^{a_k-1} (\hat{f}\bd{M}_{na_k}\overline{g_k})[m+l\frac{L}{a_k}]
  \end{align}
  Since $L/a_k \geq L$, only one element of the inner sum above is non-zero, for each $m\in\{0,\ldots,L/a_l-1\}$. It follows that 
  \begin{equation}\label{eq:coeff_nsg}
    c_{n,k} = \langle \hat{f},\bd{M}_{-na_k}g_k \rangle.
  \end{equation}
  Inserting into Algorithm \ref{alg:nsgsynthesis} yields, for all $j\in\{0,\ldots,L-1\}$,
  \begin{align*}
   \hat{\tilde{f}}[j] & = \sum_{k\in I_K} \sum_{n=0}^{L/a_k-1} c_{n,k}e^{-2 \pi inm a_k/L}\widetilde{g_k}[j] \\
   & = \sum_{k\in I_K} \sum_{n=0}^{L/a_k-1} \langle \hat{f},\bd{M}_{-na_k}g_k \rangle \bd{M}_{-na_k}\widetilde{g_k}[j],
  \end{align*}
  the discrete frame synthesis formula. By assumption, $\mathcal{G}(\bd{g},\bd{a})$ and $\mathcal{G}(\tilde{\bd{g}},\bd{a})$ are dual NSG frames and thus
  \begin{equation*}
    \hat{\tilde{f}}[j] = \hat{f}[j], \quad \text{ for all } j\in\{0,\ldots,L-1\}.
  \end{equation*}
  Applying the inverse discrete Fourier transform completes the proof. 
\end{proof}\ \\

\begin{figure}[t!]


\begin{center}
\includegraphics[width=8.5cm]{./pics/glock_mod_sllong_excerpt2.eps}

\vspace{.5cm}
\includegraphics[width=8.5cm]{./pics/glock_mod_slshort_excerpt2.eps}
\end{center}
%  \vspace{2.0cm}
%  \centerline{(a) Result 1}\medskip
\caption{sliCQ spectrograms showing an excerpt of the Glockenspiel signal after transposition of a component. The top plot was done with a slice length of $50000$ and a transition area of $20000$ samples, the bottom plot with a slice length of $5000$ and a transition area of $2000$ samples.}
\label{fig:transpose2}
\end{figure}

\begin{proof}[Proof of Proposition 2]
  Denote by $J_k$ an interval of length $L_k$, $L_k$ as in Section \ref{sec:NSGTfr}, containing the support of $g_k$. By assumption
  \begin{equation*}
    0 < \sum_{k\in I_K}|g_k[j]|^2 < \infty,\quad \text{ for all } j = 0,\ldots,L-1
  \end{equation*}
  and $L/a_k \geq L_k = |J_k|$. 
  Note that the frame operator \eqref{eq:frameop} can be written as follows
  \begin{align}
    \bd{S}f[j] %& = \sum_{k\in I_K}\sum_{n=0}^{L/a_k-1} c_{n,k}\bd{M}_{-na}g_k[j] \nonumber \\
    & = \sum_{k\in I_K}\sum_{n=0}^{L/a_k-1} \langle f,\bd{M}_{-na}g_k \rangle \bd{M}_{-na}g_k[j] \nonumber \\
    & = \sum_{k\in I_K} \sqrt{\frac{L}{a_k}} \sum_{n=0}^{L/a_k-1} \textbf{IFFT}_{L/a_k}(f\overline{g_k})[n] g_k[j]e^{-2\pi inj a_k/L} \nonumber \\
    & = \sum_{k\in I_K} \frac{L}{a_k}\textbf{FFT}_{L/a_k}(\textbf{IFFT}_{L/a_k}(f\overline{g_k}))[j]g_k[j], \label{eq:frameop2}
  \end{align}
  for all $f\in\CC^L$. Furthermore, with $\chi_{J_k}$ the characteristic function of the interval $J_k$,
  \begin{align*}
    f\overline{g_k} & = \chi_{J_k} \sum_{l=0}^{a_k-1} \bd{T}_{lL/a_k}(f\overline{g_k})\\
    & = \chi_{J_k}\textbf{FFT}_{L/a_k}(\textbf{IFFT}_{L/a_k}(f\overline{g_k}))
  \end{align*}
  and, obviously, $g_k = \chi_{J_k}g_k$. Inserting into \eqref{eq:frameop2} yields
  \begin{align}
    \bd{S}f[j] & = \sum_{k\in I_K} \frac{L}{a_k} (f\overline{g_k})[j]g_k[j] \nonumber \\
               & = f[j]\sum_{k\in I_K} \frac{L}{a_k}|g_k|^2[j].
  \end{align}
  With the sum bounded above and below, the inverse frame operator can be written as
  \begin{equation}\label{eq:invframeop1}
    \bd{S}^{-1}f[j]  = f[j]\left(\sum_{k\in I_K} \frac{L}{a_k}|g_k|^2[j]\right)^{-1}, \text{ for all } f\in\CC^L.
  \end{equation}
  Since the elements of the canonical dual frame are given by \eqref{eq:candual}, this completes the proof.
\end{proof}

\subsection{Derivation of sliCQ properties}\label{sec:sliCQprop}
\begin{proof}[Proof of Proposition~\ref{Prop:sliPerfRec}]
According to Proposition~\ref{pro:reconst}, $\tilde{f}^m$, the output of \textbf{iCQ-NSGT} in Step~9 of Algorithm~\ref{alg:slicqsyn}
satisfies to $f^m[j] = (f\cdot\bd{T}_{mN}h_0)[j+(m-1)N]$. Since $\sum_m \bd{T}_{mN} \left(h_0\overline{\tilde{h}_0}\right) \equiv 1$ holds,
\begin{equation*}
  \tilde{f} = \sum_m (f\cdot\bd{T}_{mN}h_0)\bd{T}_{mN}\overline{\tilde{h}_0}  = f\cdot\sum_m \bd{T}_{mN} \left(h_0\overline{\tilde{h}_0}\right) = f
\end{equation*}
follows.
\end{proof}\ \\

\begin{proof}[Proof of Proposition~\ref{pro:CQ_sliCQ} ]
Since $g_k$ is obtained by sampling $g^\mathcal{L}_k$ with sampling period $L/2N$, the (inverse) Fourier transform  $\widecheck{g_k}$ of $g_k$ is given by periodization of $g^\mathcal{L}_k$ as follows:
\begin{equation}
	\widecheck{g_k} [l] = \sum_{j = 0}^{\frac{L}{2N}-1} \widecheck{g^\mathcal{L}_k}[l +j\cdot 2N].
\end{equation}
Recall from \eqref{eq:CQ_coeff} that  the CQ-NSGT coefficients of $f$ with respect to $\mathcal{G}(\bd{g}^\mathcal{L},\bd{a})$ are given by
$c_{n,k} = \langle f,\mathbf{T}_{n a_k}\widecheck{g^\mathcal{L}_k} \rangle$, while 
the CQ-NSGT coefficients $c^m $ of $f^m$ are, for $m =0,\ldots,L/N-1,\,n^s= 0,\ldots, \frac{2N}{a_k}-1$ and $k \in I_K$ 
  \begin{align}\label{eq:SLICQtime}
  c^m_{n^s,k} &= \langle \widehat{f^m}, g_{n^s,k} \rangle = \langle \widehat{f^m}, \bd{M}_{-n^s a_k}g_k \rangle \nonumber \\
         & = \langle f^m, \bd{T}_{n^s a_k}\widecheck{g_k} \rangle \nonumber \\
         & = \left\langle f, h_m \sum_{j = 0}^{\frac{L}{2N}-1}  \bd{T}_{n^s a_k+(m-1+2j)N} \widecheck{g^\mathcal{L}_k} \right\rangle,
  \end{align}
  where the final inner product is taken over $\CC^L$.  
Observe that every $n = 0, \ldots, \frac{L}{a_k}-1$ can be written as $n = m\frac{N}{a_k}+ n^s$ with $n^s$ from $0,\ldots, \frac{N}{a_k}-1$ and thus
  \begin{align}
s^0_{n,k}+s^1_{n,k}& = c^{m}_{n^s+N/a_k,k} + c^{m+1}_{n^s,k} \nonumber \\
%	& = \langle f, (h_m+h_{m-1})\bd{T}_{n^s a_k+mN} \widecheck{g^\mathcal{L}_k} \rangle_{\CC^{L}}\nonumber\\ 
	& = \left\langle f, (h_m+h_{m+1})\sum_{j= 0}^{\frac{L}{2N}-1} \bd{T}_{n^s a_k+(m+2j)N} \widecheck{g^\mathcal{L}_k} \right\rangle \nonumber\\ 
        & = \left\langle f,\bd{T}_{n^s a_k+mN}\widecheck{g^\mathcal{L}_k} \right\rangle+R[n] \nonumber\\ % \label{eq:COEFFsum}\\
        & = \left\langle f,\bd{T}_{ a_k(\frac{mN}{a_k}+n^s)}\widecheck{g^\mathcal{L}_k} \right\rangle+R[n] \nonumber\\ % \label{eq:COEFFsum}\\
        &= c_{n,k} + R[n].
  \end{align}
  Here,
   \begin{align}R[n] =  &\left\langle f, (1-h_{m}-h_{m+1})\bd{T}_{n^s a_k+mN}\widecheck{g^\mathcal{L}_k}\right\rangle \nonumber \\
   +&\left\langle  (h_{m}+h_{m+1})\sum_{j= 1}^{\frac{L}{2N}-1} \bd{T}_{n^s a_k+(m+2j)N} \widecheck{g^\mathcal{L}_k}\right\rangle.\end{align}
Hence $s^0_{n,k}+s^1_{n,k}-c_{n,k} = R[n]$. The result follows from Cauchy-Schwartz' inequality, applied to the case $m=0$, observing independence from $m$.
\end{proof}

\section*{Acknowledgment}
This research was supported by the  WWTF project Audio-Miner (MA09-024), the Austrian Science Fund (FWF):[T384-N13] and the 
EU FET Open grant UNLocX (255931). The authors wish to thank the reviewers for their extremely helpful and constructive  remarks
on the first version of the manuscript.

\end{document}